\numberwithin{equation}{section}
\newtheoremstyle{thm}
  {9pt}{9pt}{\itshape}{}{\bfseries}{}{.5em}{}
\theoremstyle{thm}
\newtheorem{thm}{Theorem}[section]
\newtheorem{cor}[thm]{Corollary}
\newtheorem{lemma}[thm]{Lemma}
\newtheoremstyle{defin}
  {9pt}{9pt}{}{}{\bfseries}{}{.5em}{}
\theoremstyle{defin}
\newtheoremstyle{exm}
  {9pt}{9pt}{}{}{\scshape}{}{.5em}{}
\theoremstyle{exm}
\newtheoremstyle{proof}
  {}{}{}{}{\itshape}{:}{.5em}{}
\theoremstyle{proof}
\newcommand{\ndash}{\nobreakdash--}
\newcommand{\Z}{{\mathbb Z}}
\newcommand{\R}{{\mathbb R}}
\DeclareMathOperator{\conv}{Conv}
\def\F{\mathbf F}
\def\rr{\mathbb R}
\def\qqq{\mathbb Q}
\def\sm{\smallsetminus}
\def\cF{\mathcal F}
\def\cP{\mathcal P}
\def\<{\langle}
\def\>{\rangle}
\def\0{{\mathbf 0}}
\def\.{\hskip.06cm}
\def\ts{\hskip.03cm}
\def\conv{{\text {\rm {conv}} }}
\def\ba{\textbf{a}}
\def\bx{\textbf{x}}
\def\by{\textbf{y}}
\def\bz{\textbf{z}}
\newcommand{\A}{\mathcal{A}}
\renewcommand{\P}{\mathcal{P}}
\newcommand{\Rc}{\mathcal{R}}
\newcommand{\Sc}{\mathcal{S}}
\title[A Quantitative Steinitz Theorem]{A Quantitative Steinitz Theorem for Plane Triangulations}
\author[Igor Pak and Stedman Wilson]{Igor Pak$^\ast$ \, and \, Stedman Wilson$^\dagger$}
\date{\today}
\thanks{\thinspace ${\hspace{-.45ex}}^\ast$Department of Mathematics, UCLA, Los Angeles, CA 90095, USA; \ts
\texttt{\{pak\}@math.ucla.edu}}
\thanks{\thinspace ${\hspace{-.45ex}}^\dagger$Department of Mathematics, Ben Gurion University,
Be'er Sheva, Israel; \ts \texttt{\{stedman\}@math.bgu.ac.il}}
\begin{document}

\begin{abstract}
We give a new proof of Steinitz's classical theorem in the case of plane triangulations,
which allows us to obtain a new general bound on the grid size of the simplicial polytope
realizing a given triangulation, subexponential in a number of special cases.

Formally, we prove that every plane triangulation $G$ with $n$ vertices can be
embedded in~$\rr^2$ in such a way that it is the vertical projection of a convex polyhedral
surface.  We show that the vertices of this surface may be placed in a
$4n^3 \times 8n^5 \times \zeta(n)$ integer grid, where $\zeta(n) \leq (500 \ts n^8)^{\tau(G)}$ and
$\tau(G)$ denotes the \emph{shedding diameter} of $G$, a quantity defined in the paper.
\end{abstract}

\maketitle

\section{Introduction}

% \indent \indent

\noindent
The celebrated \emph{Steinitz's theorem} states every $3$-connected plane graph~$G$
is the graph of a $3$-dimensional convex polytope.  An important corollary of the original proof
is that the vertices of the polytope can be made integers.  The
\emph{Quantitative Steinitz Problem}~\cite{R} asks for the smallest size of such
integers as they depend on a graph.  The best current bounds are exponential in the number
of vertices in all three dimensions, even when restricted to triangulations, see~\cite{RRS}.
In this paper we improve these bounds in two directions.  While the main result of
this paper is rather technical (Theorem~\ref{t:size}), the following corollary
requires no background.

\begin{cor}
Let $G$ be a plane triangulation with $n$ vertices.  Then~$G$
is a graph of a convex polyhedron with vertices lying in a \.
$4\ts n^3 \times 8\ts n^5 \times (500\ts n^8)^{n}$ \. integer grid.
\label{c:size}
\end{cor}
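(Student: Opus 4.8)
The plan is to derive Corollary~\ref{c:size} as an immediate specialization of the main result, Theorem~\ref{t:size}, whose grid bound is $4n^3 \times 8n^5 \times \zeta(n)$ with $\zeta(n) \le (500\ts n^8)^{\tau(G)}$, where $\tau(G)$ is the shedding diameter. So the only substantive content of the corollary is the uniform estimate $\tau(G) \le n$ for every plane triangulation $G$ on $n$ vertices. First I would recall the definition of the shedding diameter given in the body of the paper: it should be the length of the longest chain in some shedding order (a reverse vertex-elimination order in which each removed vertex has a suitably structured link), or equivalently the number of ``rounds'' needed to reduce $G$ to a base case by peeling off shedding vertices. The key observation is that each such round removes at least one vertex, so the number of rounds is bounded by the number of vertices $n$.

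More concretely, I would argue as follows. A shedding process on a plane triangulation produces a sequence $G = G_0 \supsetneq G_1 \supsetneq \cdots \supsetneq G_{\tau}$ of triangulations, where each $G_{i+1}$ is obtained from $G_i$ by deleting a nonempty set of shedding vertices (or a single shedding vertex, depending on the paper's convention), and $G_\tau$ is the terminal triangle. Since $|V(G_i)|$ is strictly decreasing in $i$ and $|V(G_0)| = n$ while $|V(G_\tau)| \ge 3$, we get $\tau \le n - 3 < n$. Hence $\zeta(n) \le (500\ts n^8)^{\tau(G)} \le (500\ts n^8)^{n}$ because $500\ts n^8 \ge 1$, so the third grid dimension in Theorem~\ref{t:size} is at most $(500\ts n^8)^n$.

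Plugging this bound into Theorem~\ref{t:size} gives a simplicial polytope (hence a convex polyhedron whose graph is $G$) with vertices in a $4n^3 \times 8n^5 \times (500\ts n^8)^n$ integer grid, which is exactly the statement of Corollary~\ref{c:size}. I would also note that Theorem~\ref{t:size} delivers the polytope via a vertical projection of a convex polyhedral surface over an embedded planar drawing of $G$, and that the combinatorial type of this surface is $G$; no extra work is needed to pass from ``surface'' to ``polyhedron with graph $G$'' since a strictly convex lifting of a triangulated planar subdivision with convex outer face is the lower boundary of a simplicial $3$-polytope, whose edge graph is $G$.

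The main obstacle is essentially bookkeeping: one must be certain that the quantity called $\tau(G)$ in the precise definition really is bounded by (a linear function of) $n$ for \emph{all} plane triangulations, i.e.\ that the shedding procedure always terminates after at most $n$ steps and is always defined — in particular that every plane triangulation admits at least one shedding vertex at every stage until the base case, so that $\tau(G)$ is finite and the exponent in $\zeta(n)$ is controlled. Once the definition of shedding diameter is in hand, this is straightforward, but it is the only point where the corollary uses more than a verbatim substitution into Theorem~\ref{t:size}.
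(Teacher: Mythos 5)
Your proposal is correct and matches the paper's (implicit) derivation: Corollary~\ref{c:size} is exactly Theorem~\ref{t:size} combined with the trivial bound $\tau(G)\le n$, which holds because any chain in the order $\preceq_\ba$ consists of distinct vertices of $G$ (and shedding sequences always exist by Lemma~\ref{shedding}). Your guessed reading of $\tau(G)$ as longest chain length in the shedding order is the paper's actual definition, so no further adjustment is needed.
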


This result improves known bounds in two directions at the expense of a somewhat
weaker bound in the third direction.  However, for large families of graphs we
make sharp improvements in the third direction as well.  Below we give our
our main application.

A \emph{grid triangulation} of
\ts $[a \times b] = \{1, \ldots, a\} \times \{1, \ldots, b\}$ \ts
is a triangulation with all grid points as the set of vertices.
These triangulations have a curious structure, and have been studied
and enumerated in a number of papers (see~\cite{A,KZ,W} and references
therein).

\begin{cor}
Let $G$ be a grid triangulation of $[k \times  k]$, such that every triangle
fits in an $\ell \times \ell$ subgrid.  Then $G$ is a graph
of a convex polyhedron with vertices lying in a \.
$O(k^6) \times O(k^{10}) \times k^{O(\ell k)}$ \. integer grid.
\label{c:grid}
\end{cor}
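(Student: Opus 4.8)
The plan is to deduce this from the main result, Theorem~\ref{t:size}, after bounding the shedding diameter $\tau(G)$ for grid triangulations whose triangles fit in $\ell\times\ell$ subgrids. First note that a grid triangulation of $[k\times k]$ has exactly $n = k^2$ vertices, its vertex set being all of $[k\times k]$. Plugging $n = k^2$ into Theorem~\ref{t:size} yields the first two coordinates directly: the grid has width $4\ts n^3 = 4\ts k^6 = O(k^6)$ in the first direction and $8\ts n^5 = 8\ts k^{10} = O(k^{10})$ in the second. The third coordinate is bounded by $\zeta(n) \le (500\ts n^8)^{\tau(G)} = (500\ts k^{16})^{\tau(G)}$, so the corollary reduces to the estimate
\begin{equation}
\tau(G) \ = \ O(\ell\ts k). \label{e:tau-grid}
\end{equation}
Indeed, granting \eqref{e:tau-grid}, and using $500 \le k^9$ for $k \ge 2$ (the cases $k \le 1$ being trivial), we get $\zeta(n) \le \big(k^{25}\big)^{O(\ell k)} = k^{O(\ell k)}$, which is exactly the claimed bound in the third direction.

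Everything thus reduces to \eqref{e:tau-grid}, which is the heart of the argument. The hypothesis that every triangle of $G$ fits in an $\ell\times\ell$ subgrid says precisely that every edge of $G$ joins two grid points at $\ell_\infty$-distance less than $\ell$; hence the combinatorial distance in $G$ between two vertices is at least $1/\ell$ times their $\ell_\infty$-distance in $[k\times k]$, so $G$ is a \emph{local} triangulation in a quantitative sense. I would then build an explicit shedding order for $G$ that sweeps the square from its boundary inward: order the vertices of $[k\times k]$ by $\ell_\infty$-distance to $\partial[k\times k]$, break ties by a fixed rule, and shed them in batches chosen so that at every stage the remaining complex is a triangulated disk whose boundary cycle stays within $\ell_\infty$-distance $O(\ell)$ of a nested axis-parallel square. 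The key point is that, by $\ell$-locality, shedding the vertices at distance $d$ from $\partial[k\times k]$ interacts only with vertices whose distance lies in the window $(d-\ell,\,d+\ell)$; a crude accounting then gives $O(k)$ geometric layers to clear, with $O(\ell)$ shedding rounds needed to clear each layer, for a total of $O(\ell\ts k)$ rounds — which is, up to constants, what $\tau(G)$ measures. Substituting into the reduction above completes the proof.

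The hard part is precisely the construction and verification just sketched. One must produce a shedding order that is simultaneously (i) geometrically disciplined, so that the successive boundary cycles stay close to nested squares and the bound $\tau(G) = O(\ell k)$ genuinely holds, and (ii) combinatorially legitimate, i.e.\ each intermediate graph is of the type demanded by the definition of the shedding diameter — a triangulated disk remaining $3$-connected after the outer vertex is restored, with no pinch points or separating triangles created en route. Grid triangulations can be locally intricate even under the $\ell\times\ell$ restriction (long thin triangles and high-degree vertices are allowed), so at each step one must select a set of simultaneously sheddable vertices with care; the $\ell$-locality hypothesis is exactly what keeps this bookkeeping bounded and controls how far back into the already-shed region a single shedding round can reach.
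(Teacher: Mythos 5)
Your reduction is sound and matches the paper's skeleton: with $n=k^2$, Theorem~\ref{t:size} gives the $4k^6\times 8k^{10}$ bounds in the first two directions, and $(500\ts n^8)^{\tau(G)}=k^{O(\ell k)}$ once one knows $\tau(G)=O(\ell k)$ (the paper proves $\tau(G)\le 6\ell(p+q)=12\ell k$ in Theorem~\ref{t:diam} and then specializes $p=q=k$ in Theorem~\ref{t:grid}). The problem is that this shedding-diameter estimate is not a routine remark: it is the entire content of Theorem~\ref{t:diam}, and your proposal does not prove it. You describe a boundary-inward ring peeling and assert ``$O(k)$ layers times $O(\ell)$ rounds per layer,'' but you yourself flag that the construction and verification are ``the hard part'' and leave them out. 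In particular, you never establish that at each round you can actually find a large antichain of \emph{shedding} vertices: a boundary vertex of a plane triangulation need not be sheddable, and when it is not, Lemma~\ref{l:diag} only lets you replace it by some shedding vertex inside a component cut off by a diagonal. Without further structural control, that component need not stay within your claimed window $(d-\ell,\,d+\ell)$ of the current layer — a short diagonal can cut off a long, thin region when the evolving boundary is jagged — so the per-layer accounting ``$O(\ell)$ rounds'' is exactly the unproven point.

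This confinement issue is what the paper's proof is really about: it sweeps by \emph{columns} of width $\ell$ in three stages, deliberately protects every column $U(3+4j)$ and the bottom row via the invariants $\cP(i,1)$--$\cP(i,3)$, and uses them to show the diagonal component $A_k$ is trapped inside a single tricolumn of at most $3q\ell$ vertices; the antichain count ($\le 3q\ell+3q\ell+2p\ell$) then follows because each round removes at least one vertex from each active tricolumn. Your ring-peeling scheme would need an analogous set of maintained invariants (what plays the role of the protected columns and the connected bottom row for concentric layers?) plus an argument that the intermediate graphs remain plane triangulations, and none of this is supplied. So the proposal correctly identifies the reduction but has a genuine gap at the key estimate $\tau(G)=O(\ell k)$.
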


Setting $\ell=O(1)$ as $k\to \infty$, for the grid triangulations as in the
corollary, we have a subexponential grid size in the number $n=k^2$ of vertices:  \ts
$O(n^3)\ts \times\ts O(n^5)\ts \times \. \exp O(\sqrt{n} \ts \log n)$.

\begin{figure}[ht!]
 \begin{center}
   \includegraphics[height=3.5cm]{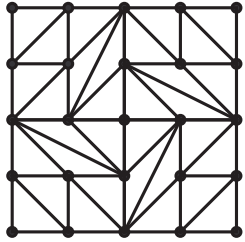}
   \caption{An example of a grid triangulation of $[5 \times 5]$, with $\ell = 3$.}
   \label{f:santos}
 \end{center}
\end{figure}

\medskip

The basic idea behind the best known bounds in the quantitative
Steinitz problem, is as follows (see~\cite{R,RRS,Ro}).
Start with the \emph{Tutte spring embedding} of~$G$ with unit weights~\cite{T},
and lift it up to a convex surface according to the \emph{Maxwell--Cremona theorem}
(see~\cite{L,R}).  Since Tutte's embedding and the lifting are given by
rational equations, this embedding can be expanded to an integer embedding.  However,
there is only so much room for this method to work, and since the determinants are given
by the number of spanning trees in~$G$, the bounds cannot be made subexponential in the
case of triangulations.

Although there are several interesting proofs of the Steinitz theorem~\cite{Z1,Z2},
neither seem to simplify in the case of triangulations.  The proof we present follows
a similar idea, but in place of the Tutte spring embedding we present an inductive
construction.  In essence, we construct a \emph{strongly convex embedding} of
plane triangulations, based on a standard inductive proof of \emph{F\'ary's theorem}~\cite{F}.
We make our construction quantitative, by doing this on a $O(n^3) \times O(n^5)$ grid,
thus reproving a weak version of the main result in~\cite{BR}.

We then lift the resulting triangulation directly to a convex surface.  The inductive
argument allows us to obtain a new type of quantitative bound $\zeta(n)= n^{O(\tau(G))}$
on the height of the lifting.  The parameter $\tau(G)$ here may be linear in~$n$
in the worst case.  It is bounded from below by both the diameter of~$G$ and the diameter
of the dual graph of~$G$.  However, this parameter is sublinear in a number of special cases,
such as the grid triangulations mentioned above (see~$\S$\ref{ss:fin-diam}).

\smallskip

The rest of this paper is structured as follows.  In the next section we recall some
definitions and basic results on \emph{graph drawing}.  In Section~\ref{s:grid} we
prove Theorem~\ref{plane}, the crucial technical result on graph embedding.
Then, in Section~\ref{s:diam}, we define the \emph{shedding diameter} and prove
Theorem~\ref{t:size}, the main result of this paper.  We discuss grid triangulations
in Section~\ref{s:grid-tri}, and conclude with final remarks in Section~\ref{s:fin}.

\bigskip

\section{Definitions and basic results} \label{s:def}

% \indent \indent

\noindent
Let $G = (V, E)$ denote a plane graph.  By abuse of notation we will identify $G$ with the subset of $\R^2$ consisting of its vertices and
edges.  We write $V(G)$ for the vertices of $G$ and $E(G)$ for the edges of $G$.  When $G$ is $2$-connected we let $\cF(G) = \{F_1, \ldots, F_m\}$ denote the set
of (closed) bounded faces of $G$.  We define $\F(G) = \bigcup_i F_i$, the region of $\R^2$ determined by $G$.  For a subgraph $H$ of $G$, we write $H \subseteq
G$.

When $G$ is $2$-connected, a vertex $v \in V$ is called a \textit{boundary vertex} if $v$ is in the boundary of $\F(G)$, and an \emph{interior} vertex otherwise.
Similarly, an edge $e \in E$ is called a \emph{boundary edge} if $e$ is completely contained in the boundary of $\F(G)$, and an \emph{interior edge} otherwise.
A \emph{diagonal} of $G$ is an interior edge whose endpoints are boundary vertices of $G$.  For a plane graph $G$ with vertex $v$, let $G - \{v\}$ denote the plane graph obtained by removing $v$ and all edges adjacent to $v$.  Let $\Delta(G)$ denote the diameter of $G$.

We say that two plane graphs $G, G'$ are \textit{face isomorphic}, written $G \sim G'$, if there is a graph isomorphism $\psi : V(G) \rightarrow V(G')$ that also induces a
bijection $\psi_\cF : \cF(G) \rightarrow \cF(G')$ of the bounded faces of $G$ and $G'$.  This last property means that $v_1, \ldots, v_k$ are the vertices of a face $F \in \cF(G)$ if and only if $\psi(v_1), \ldots,
\psi(v_k)$ are the vertices of a face $F' \in \cF(G')$.  By definition, $G \sim G'$ implies that $G$ and $G'$ are isomorphic as abstract graphs, but the converse
in not always true.  When $G \sim G'$ and $v$ is a vertex of~$G$, we will write $v'$ for the corresponding vertex of~$G'$, indicating that a face isomorphism $\psi$ is defined by $v' = \psi(v)$.

A \emph{geometric} plane graph is a plane graph for which each edge is a straight line segment.  A \emph{geometric embedding} of a plane graph $G$ \emph{in the
set} $S \subseteq \R^2$ is a geometric plane graph $G'$ such that $G \sim G'$ and every vertex of $G'$ is a point of $S$.  For a point $u = (a, b) \in \R^2$, we
will write $x(u) = a$ and $y(u) = b$ for the standard projections.

For a plane graph $G$ with $n$ vertices and an ordering of the vertices $\ba = (a_1, \ldots, a_n)$, we define a sequence of plane graphs $G_0(\ba), \ldots,
G_n(\ba)$ recursively by $G_n(\ba) = G$ and $G_{i - 1}(\ba) = G_i(\ba)  - \{a_i\}$.  We will write $G_i$ for $G_i(\ba)$ when $\ba$ is understood.  If $v$ is a
vertex of $G_i$ then we let $d_i(v)$ denote the degree of $v$ \emph{in the graph $G_i$}.

A \emph{plane triangulation} is a $2$-connected plane graph $G$ such that each bounded face of $G$ has exactly $3$ vertices.  Note in particular that if $G$ is a plane triangulation then $\F(G)$ is homeomorphic to a $2$-ball.  A boundary vertex $v$ of a plane triangulation $G$ is a \textit{shedding vertex} of $G$ if $G -
\{v\}$ is a plane triangulation.  Let $G$ be a plane triangulation with $n$ vertices. A vertex sequence $\ba = (a_1, \ldots, a_n)$ is called a \emph{shedding
sequence} for $G$ if $a_i$ is a shedding vertex of $G_i(\ba)$ for all $i = 4, \ldots, n$.  We have the following technical lemma given in \cite[$\S$2]{FPP},
where it was used for an effective embedding of graphs.

\begin{lemma}[\cite{FPP}]
Let $G$ be a plane triangulation.  Then, for every boundary edge $uv$ of $G$,
there is a shedding sequence $\ba = (a_1,\ldots, a_n)$ for~$G$, such that
$u = a_1$ and $v = a_2$.
\label{shedding} \end{lemma}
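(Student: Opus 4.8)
The plan is to prove Lemma~\ref{shedding} by induction on $n$, the number of vertices of the plane triangulation $G$. The base case $n = 3$ is immediate: $G$ is a single triangle, its boundary edge $uv$ can be extended to any ordering of the three vertices, and the shedding condition is vacuous since it only constrains $a_i$ for $i \geq 4$. For the inductive step, the essential point is the following classical fact about plane triangulations: if $G$ has at least $4$ vertices, then for any boundary edge $uv$ there exists a shedding vertex $w$ of $G$ with $w \notin \{u, v\}$. I would first establish this existence claim, then apply induction to $G - \{w\}$.

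To find such a $w$, recall that $\F(G)$ is homeomorphic to a $2$-ball, so the boundary of $G$ is a simple cycle $C = (u = c_1, c_2, \ldots, c_k = v, c_1)$ traversing the outer boundary, where I list the boundary vertices in cyclic order starting along the edge $uv$. A boundary vertex $c_j$ (with $j \notin \{1, k\}$, i.e.\ distinct from $u$ and $v$) fails to be a shedding vertex exactly when removing it breaks the triangulation property — concretely, when $c_j$ is incident to a \emph{diagonal} (an interior edge joining two boundary vertices), because then the link of $c_j$ is not a path and deleting $c_j$ leaves a face that is not a triangle. The standard argument is that among the boundary vertices of $C$, one can always find one that is not an endpoint of any diagonal: one picks a diagonal that is "innermost" with respect to the arc of $C$ it cuts off (the one bounding the smallest such arc, or more carefully, a minimal such arc under inclusion), and then every boundary vertex strictly inside that minimal arc has no diagonal incident to it and hence is a shedding vertex. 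Since the edge $uv$ is itself a boundary edge, $u$ and $v$ are consecutive on $C$, so I can choose the diagonal and the minimal arc to avoid the edge $uv$, guaranteeing that the shedding vertex $w$ found this way is different from both $u$ and $v$. (If there are no diagonals at all, then every interior-of-$C$ boundary vertex is a shedding vertex, and since $n \geq 4$ and $G$ is $2$-connected with triangular faces there is at least one boundary vertex other than $u, v$ — again one can be chosen as $w$.)

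Having produced a shedding vertex $w \notin \{u, v\}$ of $G$, set $G' = G - \{w\}$. By definition of shedding vertex, $G'$ is again a plane triangulation, now on $n - 1$ vertices, and $uv$ is still a boundary edge of $G'$ (removing $w \neq u, v$ from the outer cycle does not affect the edge $uv$). By the inductive hypothesis there is a shedding sequence $\ba' = (a_1, \ldots, a_{n-1})$ for $G'$ with $a_1 = u$ and $a_2 = v$. I then define $\ba = (a_1, \ldots, a_{n-1}, a_n)$ with $a_n = w$. We check this is a shedding sequence for $G$: by construction $G_n(\ba) = G$, $G_{n-1}(\ba) = G - \{w\} = G'$, and $G_i(\ba) = G_i(\ba')$ for all $i \leq n - 1$; the condition "$a_i$ is a shedding vertex of $G_i(\ba)$" for $i = 4, \ldots, n-1$ holds because $\ba'$ is a shedding sequence for $G'$, and for $i = n$ it holds because $w = a_n$ was chosen to be a shedding vertex of $G_n(\ba) = G$. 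This completes the induction, and also records that $u = a_1$, $v = a_2$ as required.

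The main obstacle is the existence claim in the middle paragraph — proving that a plane triangulation on $\geq 4$ vertices always admits a shedding vertex distinct from the two endpoints of a prescribed boundary edge. The subtlety is purely combinatorial-topological: one must argue carefully that a vertex lying strictly inside a minimal diagonal-cut arc of the boundary cycle genuinely has a path as its link (so that its deletion preserves both $2$-connectedness and the all-triangular-faces property), and that such a minimal arc can be chosen disjoint from the edge $uv$. Everything else — the base case and the bookkeeping in the inductive step — is routine, since the shedding condition in the definition of shedding sequence conveniently ignores the first three vertices and we are appending $w$ at the very end of the sequence. I would also remark that this is precisely the combinatorial content underlying the canonical orderings of de~Fraysseix--Pach--Pollack~\cite{FPP}, so one could alternatively cite their canonical ordering construction directly and merely verify that it can be started with a prescribed boundary edge.
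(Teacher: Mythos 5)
Your proof is correct, but it is worth noting that the paper itself does not prove Lemma~\ref{shedding} at all: it is quoted from~\cite{FPP}, where it is the existence statement underlying canonical orderings. Your induction (peel off a shedding vertex $w\notin\{u,v\}$, apply the hypothesis to $G-\{w\}$, append $w$ as $a_n$) together with the innermost-diagonal argument is exactly the standard proof of that fact, so in substance you have reconstructed the cited source's argument rather than found a new route. One economy you could take within this paper: your key existence claim is precisely the content of Lemma~\ref{l:diag} (Billera--Provan), which the authors record in Section~4 --- a boundary vertex is either a shedding vertex or an endpoint of a diagonal $e$, and each component of $\F(G)\sm e$ contains a shedding vertex. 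Applying that to any boundary vertex $c\notin\{u,v\}$ (or to $c$ the third boundary vertex when the boundary is a triangle, in which case no diagonal at $c$ can exist in a simple graph) and choosing the component of $\F(G)\sm e$ not meeting the edge $uv$ immediately gives a shedding vertex distinct from $u$ and $v$, replacing your hand-rolled minimal-arc analysis. If you do keep the minimal-arc argument, the two small points to make explicit are the ones you flag: a diagonal cannot be parallel to a boundary edge (simplicity), so the minimal cut-off arc has a vertex strictly inside it; and a vertex strictly interior to the arc avoiding the edge $uv$ is automatically distinct from $u$ and $v$. With those remarks your write-up is complete and the inductive bookkeeping is exactly right, since the shedding condition only constrains $a_i$ for $i\geq 4$ and you append $w$ at the end.
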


We say that a \emph{strictly} convex polygon $P \subset \R^2$ with edge $e$ is \emph{projectively convex} with respect to $e$ if $P$ is contained in a triangle
having $e$ as an edge.  A shedding sequence $\ba = (a_1, \ldots, a_n)$ for a plane triangulation $G$ is a \emph{convex shedding sequence} if the region
$\F(G_i(\ba))$ is a projectively convex polygon with respect to the edge $a_1a_2$ for all $i = 3, \ldots, n$.  A geometric embedding $G'$ of $G$ is
\emph{sequentially convex} if $G'$ has a convex shedding sequence.

\bigskip

\section{Drawing the triangulation on a grid}\label{s:tri}

\subsection{A Rational Embedding}
First we address a much easier question: How does one obtain a sequentially convex embedding of $G$ in $\qqq^2$ (that is, with vertex coordinates
\emph{rational})?  We describe a simple construction that produces such an embedding.  The method used to accomplish this easier task will provide part of the
motivation and intuition behind the more involved method we will use to obtain a polynomially sized embedding in $\Z^2$.

\begin{thm}
Let $G$ be a plane triangulation with $n$ vertices and boundary edge~$uv$,
and let $\ba = (a_1, \ldots, a_n)$ be a shedding sequence for $G$ with
$u= a_1$, $v = a_2$.  Then $G$ has a geometric embedding $G'$ in $\qqq^2$,
such that the corresponding sequence $\ba' = (a_1', \ldots, a_n')$ is a convex shedding
sequence for $G'$.
\label{t:rational} \end{thm}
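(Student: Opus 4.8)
The plan is to construct the embedding $G'$ by reverse induction on the shedding sequence, building up the graph one vertex at a time from $G_2$ (the single edge $a_1a_2$) back to $G_n=G$, and maintaining throughout the invariant that the current region $\F(G_i')$ is a strictly convex polygon, projectively convex with respect to the edge $a_1'a_2'$. First I would fix a triangle $T$ in $\qqq^2$ having $e=a_1'a_2'$ as one edge; the requirement ``projectively convex with respect to $a_1a_2$'' means every $\F(G_i')$ must stay inside $T$, so $T$ acts as a global constraint region. For the base case $i=3$ I place the unique vertex $a_3$ (it forms a triangle with $a_1,a_2$ since $G_3$ is a triangulation whose boundary is a triangle) at any rational interior point of $T$, which is trivially projectively convex.

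For the inductive step, suppose $G_{i-1}'$ is already embedded with $\F(G_{i-1}')$ a strictly convex polygon inside $T$, and we wish to add $a_i$ to obtain $G_i'$. Since $\ba$ is a shedding sequence, $a_i$ is a shedding vertex of $G_i$, meaning $a_i$ is a boundary vertex of $G_i$ and $G_i-\{a_i\}=G_{i-1}$ is a plane triangulation. Combinatorially this forces the neighbors of $a_i$ in $G_i$ to be a consecutive arc $w_0,w_1,\dots,w_k$ of boundary vertices of $G_{i-1}$, with $a_i$ together with the edges $w_jw_{j+1}$ bounding the new triangular faces. So the geometric task is: given the strictly convex polygon $\F(G_{i-1}')$, choose a rational point $p=a_i'$ outside it, ``beyond'' exactly the boundary edges $w_0w_1,\dots,w_{k-1}w_k$ and no others, such that (a) $p$ sees all of $w_0,\dots,w_k$ (so the new edges $pw_j$ don't cross old edges), (b) the resulting region $\F(G_{i-1}')\cup\conv(p,w_0,\dots,w_k)$ is again \emph{strictly} convex, and (c) it still lies inside $T$. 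Such a region is the convex hull obtained by ``pushing out'' the arc; the set of admissible $p$ is the intersection of a nonempty open region (determined by the supporting-line conditions at $w_0$ and $w_k$ and the beyond/beneath conditions at the intermediate vertices) with the interior of $T$, which is a nonempty open set and hence contains a rational point. One must check the arc is a proper sub-arc of the boundary (it is, since $a_1,a_2$ remain on the boundary of $G_i$ and $i>3$ guarantees at least one boundary vertex of $G_{i-1}$ is not a neighbor of $a_i$), so that the two ``endpoint'' supporting conditions are genuine open conditions rather than degenerating.

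The main obstacle is verifying that the admissible region for $p$ is always nonempty while simultaneously maintaining \emph{strict} convexity and staying inside the fixed triangle $T$ — in particular, as the polygon grows it could in principle fill up $T$, leaving no room. I would handle this by not fixing $T$ too early in spirit: since we only need projective convexity with respect to $a_1a_2$ for the \emph{final} embedding and all intermediate ones, and since there are only finitely many ($n$) steps, I can instead run the construction freely in $\qqq^2$ producing strictly convex polygons with no ambient constraint, and observe at the end that any strictly convex polygon in the plane with a distinguished edge $a_1'a_2'$ on its boundary is automatically projectively convex with respect to that edge — the supporting lines at $a_1'$ and $a_2'$ meet (or are parallel, a case avoidable by perturbation) at an apex, and the strictly convex polygon lies in the triangle they cut off with $a_1'a_2'$. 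This removes condition (c) entirely from the induction, so the admissible region for $p$ is just the intersection of finitely many open half-plane-type conditions, manifestly nonempty (one can always choose $p$ very close to the arc, just outside it), and rationality follows since a nonempty open subset of $\R^2$ contains points of $\qqq^2$. Finally, I would note that the face isomorphism $G\sim G'$ holds by construction (each step adds exactly the combinatorially prescribed faces) and that the sequence $\ba'=(a_1',\dots,a_n')$ is by construction a convex shedding sequence for $G'$, completing the proof.
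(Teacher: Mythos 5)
There is a genuine gap, and it is exactly at the point where you discard the projective convexity invariant. Your claim that ``any strictly convex polygon with a distinguished boundary edge $a_1'a_2'$ is automatically projectively convex with respect to that edge'' is false: the unit square with $a_1'a_2'$ as its bottom edge is strictly convex, lies in the upper half-plane, yet is contained in no triangle having that edge as a side (the supporting lines at $a_1'$ and $a_2'$ that keep the square on one side all lean \emph{away} from each other, so they never close off above; the same happens for a parallelogram-like polygon leaning to one side, and no ``perturbation at the end'' is available since your construction has already fixed all the vertices). Since the theorem demands a \emph{convex shedding sequence}, i.e.\ projective convexity of every intermediate region $\F(G_i')$, this cannot be waved away.

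Worse, the invariant is not just part of the conclusion; it is what makes your inductive step go through, and your nonemptiness argument fails without it. The admissible region for the new vertex is the intersection of the half-planes \emph{strictly beyond every} edge of the arc $w_0w_1,\dots,w_{k-1}w_k$ with the half-planes beneath the two adjacent boundary edges; a point ``very close to the arc, just outside it'' lies beyond only the one or two nearest arc edges and strictly \emph{beneath} the lines of the others, so it is not admissible. Whether the full intersection is nonempty depends on the total turning of the arc being less than $\pi$, and mere strict convexity does not guarantee this: if an intermediate polygon were square-like and the next vertex $a_i$ were adjacent to the entire upper chain, the regions beyond the left and right side edges are disjoint and no placement exists at all. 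The paper's proof maintains projective convexity with respect to $a_1'a_2'$ precisely to rule this out: it forces the boundary slopes to decrease strictly from $a_1'$ to $a_2'$, so the four lines $\ell_1,\ell_2,\ell_3,\ell_4$ (adjacent edge, first arc edge, last arc edge, adjacent edge) satisfy $s(\ell_1)>s(\ell_2)>s(\ell_3)>s(\ell_4)$, the open region below $\ell_1,\ell_4$ and above $\ell_2,\ell_3$ is nonempty (and, by concavity of the chain, lies beyond all intermediate arc edges as well), and any rational point chosen there preserves the invariant for the next step. To repair your argument you must carry this (or an equivalent slope/turning) condition through the induction rather than try to recover it a posteriori.
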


\begin{proof}
To simplify the proof, we make the stronger claim that the edge $a_1'a_2'$ lies along the $x$-axis,
with $a_1'$ to the left of $a_2'$, and $G'$ lies in the upper half-plane.
We proceed by induction on $n$.  If $n = 3$ then we may take the triangle with coordinates
$a_1' = (0, 0)$, $a_2' = (2, 0)$, $a_3' = (1, 1)$ as a
sequentially convex embedding of $G$ in $\qqq^2$.

If $n > 3$, then by the inductive hypothesis there is an embedding $G_{n - 1}'$ of $G_{n - 1}$ in $\qqq^2$ such that $(a_1', \ldots, a_{n - 1}')$ is a convex
shedding sequence for $G_{n - 1}'$, and furthermore $y(a_1') = y(a_2') = 0$, $x(a_1') < x(a_2')$, and $G_{n - 1}'$ lies in the upper half-plane.  Let $w_1,
\ldots, w_k$ denote the neighbors of $a_n$ in $G$, and let $w_1', \ldots, w_k'$ denote the corresponding vertices of $G_{n - 1}'$, ordered from left to right.
If $w_1' \neq a_1'$, then let $z_1'$ denote the left boundary neighbor of $w_1'$.  Similarly, if $w_k' \neq a_2'$, let $z_2'$ denote the right boundary neighbor
of~$w_k'$.

For adjacent vertices $u$ and $v$, we will denote the slope of the edge $uv$ by $s(uv)$.  Similarly, we will denote the slope of a line $\ell$ by $s(\ell)$.
Consider the lines $\ell_1, \ell_2, \ell_3, \ell_4$ spanned by the edges $z_1'w_1'$, $w_1'w_2'$, $w_{k - 1}'w_k'$, and $w_k'z_2'$, respectively.  If $w_1' =
a_1'$, we may take $\ell_1$ to be any non-vertical line passing through $a_1'$, with slope satisfying $s(\ell_1) > s(\ell_2)$.  Similarly, if $w_k' = a_2'$, we
may take $\ell_4$ to be any non-vertical line passing through $v'$, with slope satisfying $s(\ell_3) > s(\ell_4)$.

Let $A_1, A_4 \subset \R^2$ denote the open half-planes below the lines $\ell_1$ and $\ell_4$, respectively, and let $A_2$ and $A_3$ denote the open half-planes
above the lines $\ell_2$ and $\ell_3$, respectively.  Since $\F(G_{n - 1}')$ is projectively convex with respect to $a_1'a_2'$, the slopes of the lines $\ell_i$
must satisfy $s(\ell_1) > s(\ell_2) > s(\ell_3) > s(\ell_4)$.  Thus the region $S = A_1 \cap A_2 \cap A_3 \cap A_4$ is non-empty (see
Figure~\ref{f:newvertexset}).  Since each set $A_i$ is open, the set $S$ is open, so we may choose a rational point in $S$, call it $a_n'$.  For each $j = 1,
\ldots, k$ add a straight line segment  $e_j$ between $a_n'$ and the vertex $w_j'$.  Since $a_n'$ lies in the region above the lines $\ell_2$ and $\ell_3$, each
line segment $e_j$ will intersect $G_{n - 1}'$ only in the vertex~$w_j'$.

\begin{figure}[t!]
 \begin{center}
  \begin{overpic}[width=11cm]{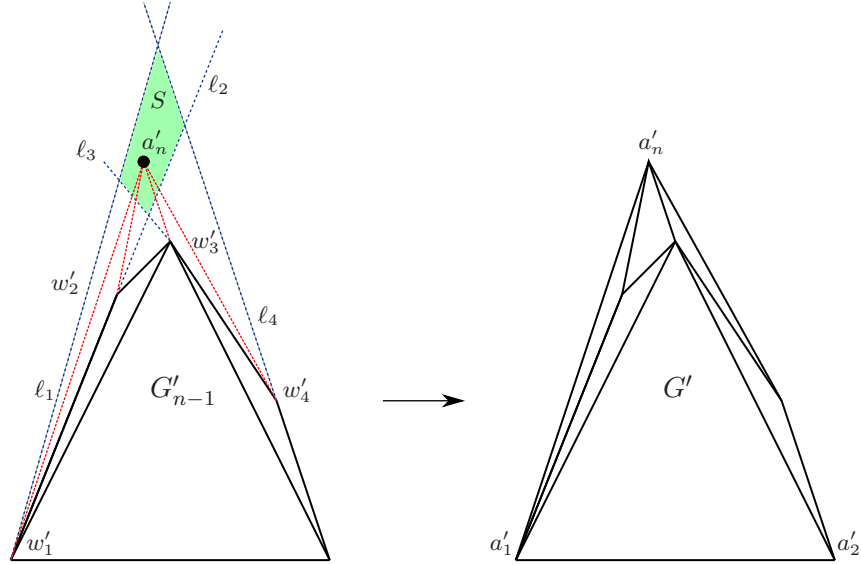}
  	  \put(17, 20){$G_{n - 1}'$}
	  \put(79, 20){$G'$}
	  \footnotesize
  	  \put(16, 50){$a_n'$}
	  \put(3, 20){$\ell_1$}
	  \put(8, 49){$\ell_3$}
	  \put(24, 57){$\ell_2$}
	  \put(30, 29){$\ell_4$}
	  \put(2, 1.5){$w_1'$}
	  \put(5, 33){$w_2'$}
	  \put(22, 38){$w_3'$}
	  \put(33, 20){$w_4'$}
	  \put(58, 1.5){$a_1'$}
	  \put(100, 1.5){$a_2'$}
	  \put(76, 50){$a_n'$}
	  \put(17, 55){$S$}
  \end{overpic}
  \caption{The new vertex $a_n'$, chosen as a rational point of the set $S$.}
  \label{f:newvertexset}
 \end{center}
\end{figure}

Let $G'$ denote the plane graph obtained from $G_{n - 1}'$ by adding the vertex $a_n'$ and the edges~$e_j$.
Then $G'$ is clearly a geometric embedding of $G_n = G$,
such that each vertex $a_i$ corresponds to $a_i'$, for $i = 1, \ldots, n$.
Furthermore, since $a_n'$ lies in the region below the lines $\ell_1$ and
$\ell_4$, the region $\F(G')$ is projectively convex with respect to $a_1'a_2'$.
From this, together with the fact that $(a_1', \ldots, a_{n - 1}')$ is a convex
shedding sequence for $G_{n - 1}'$, we have that $\ba' = (a_1', \ldots, a_n')$
is a convex shedding sequence for $G'$.
\end{proof}

\medskip

\subsection{The Shedding Tree of a Plane Triangulation}
Now we address the problem of embedding the triangulation $G$ on an integer grid.  The idea behind our construction is roughly as follows.  We start with a
triangular base whose the horizontal width is very large.  We then show that, because this horizontal width is large enough, we may add each vertex in a manner
similar to that used in the proof of Theorem~\ref{t:rational}, and we will always have enough room to find an acceptable integer coordinate.  The crucial part of the construction is the careful method in which we add each new vertex.  In particular, there are two distinct methods for adding the new vertex $a_i$, depending on whether $d_i(a_i) = 2$ or $d_i(a_i) > 2$.  To facilitate the proper placement of the vertices $a_i$ with $d_i(a_i) = 2$, we will appeal to a certain tree
structure determined by the shedding sequence $\ba$.  We introduce the following definitions.

Let $G$ be a plane triangulation with shedding sequence $\ba$.  We may assume that $G$ is embedded geometrically as in Theorem~\ref{t:rational}.  Proceeding
recursively, we define a binary tree $T = T(G, \ba)$, such that the nodes of $T$ are edges of $G$, and the edges of $T$ correspond to faces of $G$.

Let $\nu_2$ denote the edge of $G$ containing vertices $a_1, a_2$, and let $T_2$ be the tree consisting of the single node $\nu_2$.  Now let $3 \leq i \leq n$,
and let $\nu_i, \nu_i'$ denote the boundary edges of $G_i(\ba)$ immediately to the left and right of $a_i$, respectively (this is well defined because $G$ is
embedded as in Theorem~\ref{t:rational}).  Assume that we have already constructed $T_{i - 1}$, and that all boundary edges of $G_{i - 1}(\ba)$ are nodes of
$T_{i - 1}$.  Let $\xi, \xi'$ be the boundary edges of $G_{i - 1}(\ba)$ such that $\xi$ shares a face with $\nu_i$, and $\xi'$ shares a face with $\nu_i'$.

Define $T_i = T_i(G, \ba)$ to be the tree obtained from $T_{i - 1}$ by adding $\nu_i$ and $\nu_i'$ as nodes, and adding the edges $(\xi, \nu_i)$ and $(\xi',
\nu_i')$, designated \emph{left} and \emph{right}, respectively.  Then clearly all boundary edges of $G_i(\ba)$ are vertices of $T_i$.  Thus we have a
recursively defined sequence of trees $(T_2, T_3, \ldots, T_n)$, and nodes $(\nu_2, \nu_3, \nu_3', \ldots, \nu_n, \nu_n')$, such that $T_i$ has nodes $\nu_2,
\nu_3, \nu_3', \ldots, \nu_i, \nu_i'$.  We call the trees $T_i(G, \ba)$ the \emph{shedding trees} of $G$, and we write $T = T_n$ (see Figure~\ref{f:tree}).  Note that for all $i = 2, \ldots, n$, we have $T_i(G, \ba) = T(G_i(\ba),
(a_1, \ldots, a_i))$.

Let $\Rc = \{i \in \{1, \ldots, n\} \ | \ d_i(a_i) \leq 2\}$, and define a set of edges \[E_\Rc = \{(\xi, \sigma) \in E(T) \ | \ \text{$\sigma = \nu_j$ or
$\sigma = \nu_j'$ for some $j \notin \Rc$}\}.\]  Let $T_i^\ast = T_i^\ast(G, \ba)$ be the tree obtained from $T_i$ by contracting all edges in $E(T_i) \cap E_\Rc$ (shown in
blue in Figure~\ref{f:tree}).  Note that each $T_i^\ast$ is a \emph{full} binary tree.  We call the trees $T_i^*$ the \emph{reduced
trees} of $G$, and we write $T^\ast = T_n^\ast$.  If $d_i(a_i) = 2$ for all $i \geq 3$, then $T^\ast = T$.

\begin{figure}[t!]
 \begin{center}
  \begin{overpic}[width=11.5cm]{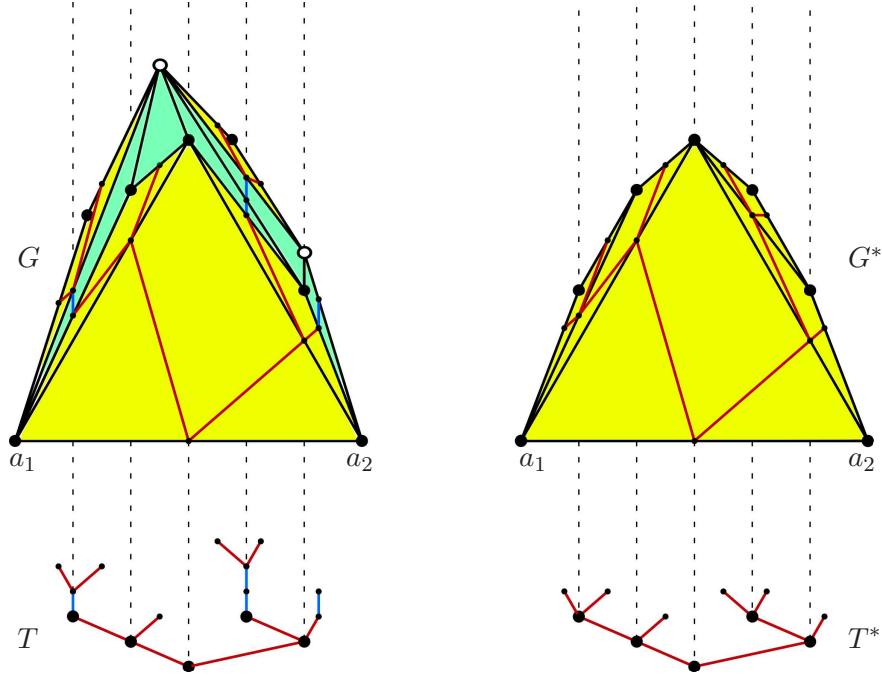}
 	  \put(1, 47){$G$}
	  \put(97, 47){$G^\ast$}
	  \put(1, 3){$T$}
	  \put(97, 3){$T^\ast$}
	  \put(0, 24){$a_1$}
	   \put(39, 24){$a_2$}
	   \put(59, 24){$a_1$}
	   \put(97, 24){$a_2$}
  \end{overpic}
  \caption{The triangulations $G$ and $G^\ast$, together with corresponding trees $T$ and $T^\ast$.  Each node of $T$ corresponds to an edge of $G$, and
  similarly for $T^\ast$ and $G^\ast$.  The tree $T^\ast$ is obtained from $T$ by contracting the blue edges.  The large nodes of $T$ are the internal nodes of
  $T^*$, and correspond to the vertices of $G^*$ other than $a_1$ and $a_2$.}
  \label{f:tree}
 \end{center}
\end{figure}

A fundamental idea behind our integer grid embedding is that the reduced tree $T^\ast$ contains all of the critical information needed for carrying out the
embedding of $G$.  For example, for each $i \in \Rc - \{1, 2\}$, the vertex $a_i$ corresponds to an internal node of $T^\ast$ (shown as large dots in
Figure~\ref{f:tree}).  Thus the structure of $T^\ast$ tells us how to horizontally space the vertices $a_i$ with $d_i(a_i) = 2$, and how to choose the slopes of
the boundary edges adjacent to them.  On the other hand, when adding vertices $a_i$ with $d_i(a_i) > 2$, our construction will have the property that the
boundary slopes will be perturbed only slightly, and the horizontal distances between vertices will only increase.  Furthermore, throughout our entire
construction the total horizontal width of the embedding will remain fixed.

Given a subsequence of $T_2^*, T_3^*, \ldots, T_n^*$ consisting of the \emph{distinct} reduced trees of $G$, there is a natural construction which produces a triangulation $G^*$ and shedding sequence $\ba^\ast$, such that the distinct reduced trees of $G$ are exactly the shedding trees of $G^\ast$.  This construction is the content of the next lemma.  That is, we will use Lemma~\ref{t:degree2} to define~$G^\ast$ precisely in Section~\ref{s:grid}.  This lemma can also be thought of as a special instance of Theorem~\ref{plane}, in the case that $d_i(a_i) = 2$ for all $i \geq 3$.

\begin{lemma}  Let $n \geq 3$, and let $(t_2, \ldots, t_n)$ be a sequence of full binary trees, such that $t_{i - 1}$ is a subtree of $t_i$, and $t_i$ has $1 +
2(i - 2)$ nodes, for all $i = 2, \ldots, n$.  Then there is a sequentially convex plane triangulation $H$ with $n$ vertices, embedded in a $2(n - 2) \times
\binom{n - 1}{2}$ integer grid, and a convex shedding sequence $\ba$ for $H$, such that $t_i$ is isomorphic to $T_i(H, \ba)$ for all $\nobreak{i = 2, \ldots, n}$.
\label{t:degree2}
\end{lemma}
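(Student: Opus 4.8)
\textbf{Proof plan for Lemma~\ref{t:degree2}.}
The plan is to build $H$ by induction on $n$, mirroring the construction in Theorem~\ref{t:rational} but carried out entirely on the integer grid, using the given tree sequence $(t_2,\ldots,t_n)$ to dictate the combinatorial choices. For the base case $n=3$ we place the single triangle at $a_1=(0,0)$, $a_2=(2,0)$, $a_3=(1,1)$, which sits inside the $2\times 1$ grid as required by $2(n-2)=2$ and $\binom{n-1}{2}=1$, and whose shedding tree $T_3(H,\ba)$ is the unique full binary tree on $3$ nodes, necessarily isomorphic to $t_3$. For the inductive step, suppose $H_{n-1}$ with shedding sequence $(a_1,\ldots,a_{n-1})$ has been built on a $2(n-3)\times\binom{n-2}{2}$ grid with $T_i(H_{n-1},\ba)\cong t_i$ for $i\le n-1$. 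The tree $t_n$ is obtained from $t_{n-1}$ by attaching two new leaves to some existing leaf $\xi$ of $t_{n-1}$; since every boundary edge of $G_{n-1}$ is a node of $T_{n-1}\cong t_{n-1}$, this identifies a boundary edge of $H_{n-1}$ above which the new vertex $a_n$ must be added so that $\nu_n,\nu_n'$ become the two new leaves. (Because every step has $d_i(a_i)=2$, the vertex $a_n$ is always attached to exactly one boundary edge, so there is no ambiguity.)

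The key technical point is the placement of $a_n$. I would first rescale horizontally: double every $x$-coordinate of $H_{n-1}$, so the grid becomes $2(n-2)$ wide and every old vertex stays a lattice point, while the boundary edge $\xi$ now spans a horizontal distance that is even, hence has an integer midpoint. I would place $a_n$ at that midpoint, raised by some height $h$. The half-plane constraints from the proof of Theorem~\ref{t:rational} — below the lines extending the boundary neighbors of $\xi$, above the line through $\xi$ itself — cut out an open region; I need to check that choosing $a_n$ directly above the midpoint of $\xi$ lands in this region for a suitable integer $h$, and that $h$ can be taken within the allotted vertical budget. Here projective convexity of $\F(H_{n-1})$ with respect to $a_1a_2$ is exactly what guarantees the region is a nonempty open cone opening upward over $\xi$; the midpoint of $\xi$ lies on its axis, so some positive height works, and a crude bound on the slopes (controlled by the width $2(n-2)$ and the current height $\le\binom{n-2}{2}$) shows $h\le n-2$ suffices, giving new height $\le\binom{n-2}{2}+(n-2)=\binom{n-1}{2}$.

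After placing $a_n$ I add the two edges to the endpoints of $\xi$; these lie strictly inside the cone, so they meet $H_{n-1}$ only at those endpoints, and $\F(H_n)$ is again projectively convex with respect to $a_1a_2$, so $(a_1,\ldots,a_n)$ is a convex shedding sequence for $H=H_n$. By construction the boundary edges immediately left and right of $a_n$ are the two new leaves attached to $\xi$, so $T_n(H,\ba)$ is $T_{n-1}(H_{n-1},\ba)$ with two leaves added at $\xi$, i.e. isomorphic to $t_n$; and since doubling $x$-coordinates does not change the combinatorics, $T_i(H,\ba)\cong t_i$ for all $i\le n-1$ as well. The main obstacle I anticipate is the bookkeeping that makes the height bound come out to exactly $\binom{n-1}{2}$: the naive estimate on the admissible height interval shrinks as slopes flatten near the ends of a wide base, so I would need to track how the horizontal doubling interacts with the accumulated slope bounds, and possibly argue that the worst case is the "caterpillar" tree where $\xi$ is always an extreme edge. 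Everything else — nonemptiness of the region, planarity of the added edges, preservation of projective convexity — is a direct transcription of the argument already given for Theorem~\ref{t:rational}.
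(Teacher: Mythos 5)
Your plan diverges from the paper's proof: the paper does not induct with local placements at all. It reads off $m$ and $m'$ (the numbers of internal nodes of $t_n$ left and right of the root), places \emph{all} $n$ vertices at once at the explicit points $x_k=k$, $y_k=\binom{m'+2}{2}-\binom{|k|+1}{2}$ on a piecewise parabola (plus one extra point on the $x$-axis), and lets the depth-first order on the internal nodes of $t_n$ dictate which previously placed vertex each $a_i$ is joined to. Because consecutive boundary slopes in that configuration are integers decreasing by exactly $1$ from left to right (and the one long base edge has slope at least $m+1$), projective convexity of every $\F(H_i)$ is immediate, as are the width $2(m'+1)\le 2(n-2)$ and height $\binom{m'+2}{2}\le\binom{n-1}{2}$.

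Your inductive scheme has a genuine gap precisely at the quantitative step you flag as "bookkeeping". First, the rescaling is arithmetically off: doubling the $x$-coordinates of an $H_{n-1}$ of width $2(n-3)$ gives width $4(n-3)$, not $2(n-2)$, and since you must rescale at every step to create an integer midpoint, the width grows like $2^{\,n}$, destroying the $2(n-2)\times\binom{n-1}{2}$ bound that is the whole content of the lemma. Second, even granting an integer abscissa, the existence of an integer height $h$ inside the admissible region is not established. That region sits between the line through $\xi$ and the extensions of the two adjacent boundary edges; its vertical extent over the midpoint is controlled by the \emph{differences} of adjacent boundary slopes, and in an arbitrary lattice embedding these are rationals whose gaps can be far smaller than $1$ (denominators up to the width), so no lattice point need exist at your chosen $x$-coordinate, and nothing in your induction hypothesis prevents this. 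Your claim that "a crude bound on the slopes shows $h\le n-2$ suffices" is exactly the missing invariant, not a consequence of the stated hypotheses: to close the induction you would need to carry a quantitative lower bound on consecutive slope gaps (and an accounting of how each insertion degrades it), which is what the paper sidesteps by choosing the parabola so that slope gaps are exactly $1$ by construction. Without such an invariant the inductive step does not go through, and the worst case is not obviously the caterpillar.
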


\begin{proof}
	Let $m$ and $m'$ denote the number of internal nodes of $t_n$ to the left and right of the root node, respectively.  Note that $m + m' + 3 = n$.  Without
loss of generality we may assume $m \leq m'$.  For $-m \leq k \leq m' + 1$, we define \[x_k = k, \quad \quad \quad y_k = \binom{m' + 2}{2} - \binom{|k| +
1}{2}.\]  Additionally, we define \[x_{-m - 1} = -x_{m' + 1}, \quad \quad \quad y_{-m - 1} = 0.\]  Note that the $n$ points $(x_k, y_k)$ all lie on the (piecewise)
parabola defined by
\[y \. = \. -\ts \frac{x^2+|x|}{2} \. + \. \frac{(m' + 2)(m' + 1)}{2}.\]
These points will serve as the vertices of the triangulation~$H$
(compare with the vertices of~$G^*$ in Figure~\ref{f:tree}).
		
	For all $i = 3, \ldots, n$, since $t_i$ is full and contains two more nodes than $t_{i - 1}$, it follows that $t_i$ contains exactly one more
internal node than $t_{i - 1}$.  Let $(\xi_3, \ldots \xi_n)$ denote the sequence of internal nodes so obtained.  Note that $\xi_3$ is the root node of all the
trees $t_i$, and $t_2$ consists of the single node $\xi_3$.  The nodes of $t_n$ may be linearly ordered by a depth-first search on $t_n$, such that left nodes
are visited before right nodes.  Call this order~$\Omega$.  This restricts to a linear order on the internal nodes of $t_n$, which induces a permutation $\omega
: \{3, \ldots, n\} \rightarrow \{3, \ldots, n\}$.  That is, node $\xi_i$ has position $\omega(i)$ in the order $\Omega$.  Then we define a sequence of points
$\ba = (a_1, a_2, \ldots, a_n)$ by
\[ \begin{array}{cclc}
a_1  & = & (x_{-m - 1}, y_{-m - 1}),& \\ a_2 & =  & (x_{m' + 1}, y_{m' + 1}), &\\ a_i  & =  & (x_{\omega(i) - m - 3}, y_{\omega(i) - m - 3}) & \text{for $3 \leq
i \leq n$.}
\end{array} \]

For $\nobreak{i = 3, \ldots, n}$, let $\nobreak{\omega_i : \{3, \ldots, i\} \rightarrow \{3, \ldots, i\}}$ denote the permutation induced by restricting the order $\Omega$
to the internal nodes $(\xi_3, \ldots, \xi_i)$ of $t_i$.  Note that $\omega = \omega_n$.  For each $\nobreak{i = 3, \ldots, n}$, we would like to determine the internal
nodes of $t_i$ that immediately precede and succeed $\xi_i$ in the order $\omega_i$.  For this purpose, we define functions \[f, g : \{3, \ldots, n\}
\rightarrow \{1, \ldots, n\}\] as follows.
\begin{align*}
f(i) & = \begin{cases}
	\omega_i^{-1}(\omega_i(i) - 1) & \text{if $\omega_i(i) > 3$}, \\
	1 & \text{otherwise}
	\end{cases} \\
g(i) & = \begin{cases}
	\omega_i^{-1}(\omega_i(i) + 1) & \text{if $\omega_i(i) < i$}, \\
	2 & \text{otherwise.}
	\end{cases}
\end{align*}

We may now define a sequence of plane triangulations $H_1, \ldots, H_n$ recursively.  Let $H_1$ consist of the single vertex $a_1$, and let $H_2$ consist of the
vertices $a_1$, $a_2$ and the line segment $a_1a_2$.  Now let $3 \leq i \leq n$, and suppose we have constructed $H_{i - 1}$.  We obtain $H_i$ by adding the
vertex $a_i$ and the line segments $a_ia_{f(i)}$ and $a_ia_{g(i)}$ to $H_{i - 1}$.  This completes the construction of the graphs $H_2, \ldots, H_n$.  We write
$H = H_n$.

We now check that $\ba = (a_1, \ldots, a_n)$ is a convex shedding sequence for $H$.  By construction, we have immediately that $H_i$ is a plane triangulation
with $H_{i - 1} = H_i - \{a_i\}$ for all $\nobreak{i = 2, \ldots, n}$, and furthermore $d_i(a_i) = 2$ for all $i \geq 3$.  For $k \geq -m$, the slope of the edge between
adjacent vertices $(x_k, y_k)$ and $(x_{k + 1}, y_{k + 1})$ of $H$ is
\[\frac{y_{k + 1} - y_k}{x_{k + 1} - x_k} = \binom{|k| + 1}{2} - \binom{|k + 1| + 1}{2} =
\begin{cases}
	-(k + 1) & \text{if $k \geq 0$}, \\
	-k & \text{if $k < 0$.}
\end{cases}
\]
Additionally, the slope of the edge between $(x_{-m - 1}, y_{-m - 1})$ and $(x_{-m}, y_{-m})$ is
\[
\begin{aligned}
& \frac{y_{-m} - y_{-m-1}}{x_{-m} - x_{-m-1}} = \frac{y_{-m}}{-m + (m' + 1)} = \frac{1}{m'  - m + 1}\left[\binom{m' + 2}{2} - \binom{m + 1}{2}\right] \\
&= \frac{(m' + 2)(m' + 1) - (m + 1)m}{2(m' - m + 1)} = \frac{(m' - m + 1)(m' + m + 2)}{2(m' - m + 1)} \\
&= \frac{m' + m + 2}{2} \geq \frac{m + m + 2}{2} = m + 1.
\end{aligned}
 \]
Thus the boundary edge slopes of $H$ are strictly decreasing from left to right.  Since $\nobreak{d_i(a_i) = 2}$ for all $i \geq 3$, this implies that the
boundary edge slopes of each $H_i$ are also strictly decreasing from left to right.  It follows that $\F(H_i)$ is projectively convex, for all $i \geq 3$.  Hence
$\ba$ is a convex shedding sequence for $H$.

To see that $t_i$ is isomorphic to $T_i(H, \ba)$ for all $i = 2, \ldots, n$, we construct an explicit isomorphism.  We define a map $\psi_2 : t_2 \rightarrow
T_2(H, \ba)$ by $\psi_2(\xi_3) = a_{f(3)}a_{g(3)} = a_1a_2$, which is trivially an isomorphism.  For $i \geq 3$, and $j = 3, \ldots, i$, let $\xi_j^-$ and
$\xi_j^+$ denote the left and right child, respectively, of the internal node $\xi_j$ of $t_i$.  We define a map $\psi_i : t_i \rightarrow T_i(H, \ba)$ by
\[ \begin{array}{lclr}
\psi_i(\xi_j)  & =  & a_{f(j)}a_{g(j)}, & \\ \psi_i(\xi_j^-) & = & a_ja_{f(j)}, & \\ \psi_i(\xi_j^+) & =  & a_ja_{g(j)} & \quad \text{for $j = 3, \ldots, i$}.
\end{array} \]

From the definition of the order $\Omega$ and the resulting functions $f$ and $g$, it is straightforward to check that $\psi_i$ is well-defined and bijective.
Since the triangle $(a_ja_{f(j)}a_{g(j)})$ is a face of $H_j$ for all $j = 3, \ldots, i$, the pairs $(a_{f(j)}a_{g(j)}, a_ja_{f(j)})$ and $(a_{f(j)}a_{g(j)},
a_ja_{g(j)})$ are edges of $T_i(H, \ba)$.  Thus $\psi_i$ is clearly a tree isomorphism.  We may think of $\psi_n$ as providing a correspondence between the
internal node $\xi_i$ and the vertex~$a_i$ (whose neighbors in $H_i$ are $a_{f(i)}$ and $a_{g(i)}$), for all $i = 3, \ldots, n$ (see Figure~\ref{f:tree}).

Finally, the width of the grid is \[x_{m' + 1} - x_{-m - 1} = 2x_{m' + 1} = 2(m' + 1) \leq 2(n - 2),\] and the height of the grid is \[y_0 = \binom{m' + 2}{2}
\leq \binom{n - 1}{2}.\]  Therefore $H$ is embedded in an integer grid of size $2(n - 2) \times \binom{n - 1}{2}$.
\end{proof}

\medskip

\subsection{The Integer Grid Embedding}\label{s:grid}
Given a plane triangulation $G$ with $n$ vertices and shedding sequence $\ba$, let $t_i = T_i^\ast(G, \ba)$ denote the reduced trees of $G$.  Let $\rho$ denote
the unique increasing bijection from $\Rc$ to $\{1, \ldots, R\}$, where $\Rc$ is the subset of $\{1, \ldots, n\}$ defined above, and $R = |\Rc|$.  Note that
$\nobreak{1, 2, 3 \in \Rc}$, so $\rho(1) = 1, \rho(2) = 2, \rho(3) = 3$.  Define a map $\nobreak{h : \{1, \ldots, n\} \rightarrow \{1, \ldots, R\}}$ by taking
$h(i)$ to be the unique index for which \[\rho^{-1}(h(i)) \leq i < \rho^{-1}(h(i) + 1).\] (We require the inequality on the right to hold only when $h(i) + 1 \leq R$.)  In particular, if $i \in \Rc$ then $h(i) = \rho(i)$.

The sequence of distinct trees $t_{\rho^{-1}(1)}, \ldots, t_{\rho^{-1}(R)}$ satisfies the hypotheses of Lemma~\ref{t:degree2}.  Therefore we let $G^\ast$
denote the sequentially convex triangulation constructed from this sequence of trees as in Lemma~\ref{t:degree2}.  That is, $G^\ast$ is the triangulation $H$ in the notation of the lemma, and $G^\ast$ has the exact vertex coordinates given in the lemma.  We let $\ba^\ast = (a_1^\ast, \ldots, a_R^\ast)$ denote the corresponding convex shedding sequence of $G^\ast$ produced by Lemma~\ref{t:degree2}.  We also define $G_i^\ast = G^\ast_i(\ba^\ast)$ for all $i = 1, \ldots, R$, so in particular $G^\ast_R = G^\ast$. We call the $G_i^\ast$ the \emph{reduced
triangulations} of $G$ (see Figure~\ref{f:tree}).  Note that each vertex $a_i^\ast$ has degree $2$ in $G_i^\ast$.  So we may think of $G^\ast$
as being obtained from $G$ by ``throwing away" all vertices $a_i$ for which $d_i(a_i) > 2$.  It was this property that originally motivated our definition of
$G^\ast$.

As we will see, the triangulation $G^\ast$ will tell us exactly how to add vertices of degree $2$, in our construction of a sequentially convex embedding of $G$.
A particular property of the reduced triangulations makes this possible.  Namely, for any boundary edge $e$ of $G_i$, there is a \emph{corresponding boundary
edge} $e^\ast$ of $G_{h(i)}^\ast$, which we define as follows.  First note that from the definitions, the trees
$T_{h(i)}(G^\ast, \ba^\ast)$ and $T_i^\ast(G, \ba)$ are isomorphic.  Thus we may think of an edge of $G_{h(i)}^\ast$ (which is a node of $T_{h(i)}(G^\ast, \ba^\ast)$) as a node
of $T_i^\ast(G, \ba)$.  So for a boundary edge $e$ of $G_i$, we denote by $e^\ast$ the unique boundary edge of $G_{h(i)}^\ast$ (thought of as a node of $T_i^\ast(G,
\ba)$), that is identified with the node $e$ of $T_i(G, \ba)$ upon contracting the edges in the set~$E_\Rc$.

\begin{thm}
Let $G$ be a plane triangulation with $n$ vertices and boundary edge $uv$,
and let $\ba = (a_1, \ldots, a_n)$ be a shedding sequence for~$G$ with
$u = a_1$, $v = a_2$.  Then $G$ has a geometric embedding~$G'$ in a
$4n^3 \times 8n^5$ integer grid, such that the corresponding sequence
$\ba' = (a_1', \ldots,a_n')$ is a convex shedding sequence for~$G'$.
\label{plane} \end{thm}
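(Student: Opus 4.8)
The plan is to make the rational construction of Theorem~\ref{t:rational} quantitative by working simultaneously with $G$ and its reduced triangulation $G^\ast$. I would proceed by induction on $i$, building the embedding $G_i'$ of $G_i$ on an integer grid while maintaining three invariants: (a) the total horizontal width of $\F(G_i')$ equals a fixed large value $W$ (on the order of $4n^3$); (b) there is a ``comparison map'' relating the boundary of $G_i'$ to the boundary of $G_{h(i)}^\ast$ (which is embedded by Lemma~\ref{t:degree2}), so that for each boundary edge $e$ of $G_i'$ the slope $s(e)$ is within a controlled additive error of an integer multiple of $1/W$ times the slope of the corresponding edge $e^\ast$; (c) all vertex coordinates are integers bounded by $4n^3$ horizontally and $8n^5$ vertically. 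The base case $G_3'$ is a suitably scaled version of the triangle from Lemma~\ref{t:degree2}, stretched horizontally to width $W$.

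For the inductive step I would distinguish the two cases flagged in the paragraph preceding the theorem. When $d_i(a_i) = 2$, the vertex $a_i$ is added ``above'' a single boundary edge $\xi = \nu_i = \nu_i'$, and this is exactly the situation governed by $G^\ast$: the node $a_i$ corresponds to an internal node of $T^\ast$, so Lemma~\ref{t:degree2} already tells us the horizontal position and the ideal boundary slopes at $a_i$ in the reduced picture. I would place $a_i'$ at the integer point obtained by rescaling the corresponding vertex $a_{h(i)}^\ast$ of $G^\ast$ horizontally by the factor $W/(2(n-2))$ (so the width invariant is preserved exactly) and choosing the vertical coordinate just high enough that the two new edges lie strictly above $\xi$ and the region stays projectively convex — here the height bound from Lemma~\ref{t:degree2}, namely $\binom{n-1}{2}$, times the vertical scaling, feeds the $8n^5$ bound. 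When $d_i(a_i) > 2$, the vertex is added over a fan $w_1', \dots, w_k'$ as in Theorem~\ref{t:rational}; now $a_i$ does \emph{not} appear in $G^\ast$, and the key point is that $\F(G_{i-1}')$ is still projectively convex, so the open region $S = A_1 \cap A_2 \cap A_3 \cap A_4$ (intersection of two ``below'' half-planes and two ``above'' half-planes, exactly as in the rational proof) is non-empty; I must show $S$ contains an integer point. The usual argument is that $S$ contains an axis-aligned box whose dimensions are bounded below by a polynomial in $1/n$ times $W$ — this uses that consecutive boundary slopes differ by at least $1/\text{poly}(n)$ (a consequence of invariant (b) and the fact that in $G^\ast$ consecutive slopes are distinct integers), and that the horizontal gaps between the $w_j'$ are at least $1$ — so once $W$ is a large enough polynomial in $n$, the box has side $> 1$ and hence contains a lattice point. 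Crucially, adding $a_i$ this way perturbs the boundary slopes of $G_{i-1}'$ only by replacing the edges $w_j' w_{j+1}'$ lying ``under'' $a_i$ by the two edges $w_1' a_i'$ and $a_i' w_k'$, and one checks these new slopes still satisfy the slope-comparison bound (b) with the \emph{same} edge $\xi^\ast$ of $G^\ast$, because $e^\ast$ only depends on the reduced tree; the width stays fixed because we add no new horizontal extent, and horizontal distances between surviving boundary vertices only increase.

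I expect the main obstacle to be the bookkeeping in invariant (b): one must propagate the slope-error estimate through both types of steps and verify that the accumulated error over all $n$ steps is still small enough that consecutive boundary slopes remain separated by at least, say, $1/(4n^3)$, which is what guarantees the box inside $S$ is large enough to catch a lattice point. In the degree-$2$ steps the slopes are essentially copied from $G^\ast$ (rescaled), so no error accumulates there; the danger is in the degree-$>2$ steps, where we have freedom in choosing $a_i'$ inside $S$ and must choose it so as not to degrade the separation for future steps — concretely, $a_i'$ should be taken near the ``top'' of $S$ so that the new edge slopes $s(w_1' a_i')$ and $s(a_i' w_k')$ bracket the old fan slopes as tightly as possible. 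Once the width $W = 4n^3$ and height bound $8n^5$ are checked to absorb all these polynomial losses (the exponents $3$ and $5$ are exactly what come out of: one factor of $n$ for the number of vertices, one for the slope denominators $O(n)$ from $G^\ast$, one more for the height $\binom{n-1}{2} = O(n^2)$ of $G^\ast$ interacting with the width), the three invariants at $i = n$ give precisely the claimed embedding $G'$ of $G = G_n$ on a $4n^3 \times 8n^5$ grid, and convex shedding of $\ba'$ follows from invariant that each $\F(G_i')$ is projectively convex with respect to $a_1' a_2'$, exactly as in the proof of Theorem~\ref{t:rational}.
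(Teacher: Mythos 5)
Your overall architecture is the same as the paper's: an induction guided by the reduced triangulation $G^\ast$ of Lemma~\ref{t:degree2}, a case split according to $d_i(a_i)=2$ versus $d_i(a_i)>2$, and invariants tracking the fixed width, the slope error relative to the reference picture, and projective convexity. The genuine gap is in the quantitative engine, which is the actual content of Theorem~\ref{plane}. Your budget --- consecutive boundary slopes separated by at least $1/(4n^3)$, horizontal gaps between the $w_j'$ at least $1$, hence a unit box inside $S$ --- does not close. Placing a vertex at a lattice point perturbs the slopes of its incident boundary edges by roughly $1/(\text{horizontal extent of the edge})$; with only ``gaps at least $1$'' this perturbation can be of order $1$, which swamps separations of order $n^{-3}$ after even one step, and the vertical extent of $S$ is of order (slope gap)$\times$(horizontal distance), which under your bounds can be far smaller than $1$, so no lattice point is guaranteed. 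What is missing are the two devices the paper uses to make the ledger balance: (i) an invariant guaranteeing that every boundary edge of $G_i'$ has horizontal extent at least that of its counterpart in the \emph{scaled} reduced triangulation $Z$ (hence at least $\alpha=2n^2+n+1$), which caps the slope error contributed by each step at less than $1$; and (ii) an \emph{anisotropic} scaling of $G^\ast$ (vertical factor $2n$ times the horizontal one), which makes consecutive reference slopes differ by at least $2n$, so that even after the error accumulates to at most $n$ over all steps a margin of at least $2$ survives and convexity at $w_1'$, $w_k'$ is preserved. Without some explicit mechanism forcing the reference slope separations to dominate the total accumulated rounding error, the induction has no reason to go through; keeping separations merely above $1/\mathrm{poly}(n)$ cannot work against integer rounding.

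Two secondary points. First, your claim that the degree-$2$ steps accumulate no slope error is not right: the new vertex must be fitted to the already-perturbed edge $w_1'w_2'$ (in the paper this is the shear, which propagates the existing error $\varepsilon$) and then rounded to a lattice point (which adds a further error bounded by $1$); relatedly, placing $a_i'$ at the \emph{absolute} rescaled $x$-coordinate of the corresponding vertex of $G^\ast$ is not obviously consistent, since by stage $i-1$ the endpoints $w_1',w_2'$ have drifted from their ideal positions --- the paper instead fits a scaled and translated copy of the relevant triangle of $Z$ onto the current edge $w_1'w_2'$ precisely to absorb this drift. Second, in the degree-$>2$ case the new vertex should be taken near the \emph{bottom} corner of $S$, i.e.\ just above the intersection of the two extreme fan-edge lines through $w_1'$ and $w_k'$ (this is exactly the paper's choice $(\overline{x},\overline{y})$ plus rounding); taking it near the top of $S$ would make the new slopes nearly equal to the adjacent boundary slopes $\hat{s},\hat{u}$ and destroy the margin, although your stated goal of bracketing the old fan slopes as tightly as possible is the correct intuition.
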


\begin{proof}
We recursively construct a sequence of graphs $G_1', \ldots, G_n'$, and a sequence of vertices $a_1', \ldots, a_n'$, such that each $G'_i$ is a geometric
embedding of $G_i$ with convex shedding sequence $\ba' = (a_1', \ldots, a_i')$, where $a_i'$ is the vertex of $G'$ corresponding to $a_i$.  Let $G_i^*$ denote
the reduced triangulations of $G$, and let $\ba^\ast = (a_1^\ast, \ldots, a_{R}^\ast)$ denote the corresponding shedding sequence for $G^\ast$.  Let $m'$
denote the number of vertices of $G^\ast$ lying between $a_3^\ast$ and $a_2^\ast$, and $m$ the number of vertices lying between $a_1^\ast$ and $a_3^\ast$.  Then
$m' + m + 3 = n$.  Without loss of generality we may assume that $m \leq m'$.

For points $v_1, v_2 \in \R^2$ and $e = v_1 v_2$ the line segment between them, we write \[x(e) = |x(v_1) - x(v_2)| \quad \quad \text{and} \quad \quad y(e) =
|y(v_1) - y(v_2)|.\]  We first scale $G^\ast$ to obtain a much larger triangulation, which we will use to construct the triangulations $G_i'$.  We choose the
scaling factors large enough so that we will have ``enough room" to carry out our constructions.  Specifically, let $\alpha = 2n^2 + n + 1$ and $\beta =
2n\alpha$.  We define $Z_i$ to be the result of scaling $G_i^*$ by a factor of $\alpha$ in the $x$ dimension and $\beta$ in the $y$ dimension.  That is, for each
$i = 1, \ldots, R$, we define $z_i = (\alpha x(a_i^\ast), \beta y(a_i^\ast))$.  Then $\bz = (z_1, \ldots, z_R)$ is the shedding sequence for $Z_R$ corresponding
to $\ba^\ast$.  We write $Z = Z_R$.

Since $G_i' \sim G_i$ (as we verify below) the edges of $G_i'$ and $G_i$ are in correspondence.  Thus every boundary edge $e$ of $G_i'$ has a \emph{corresponding boundary edge} $e^\ast$ of $G_{h(i)}^\ast$, as defined above.  We then write $Z(e)$ for the edge of $Z_{h(i)}$ corresponding to $e^\ast$.  Note that if $e^\ast$ has slope $s$,
then $Z(e)$ has slope $\frac{\beta}{\alpha}s$.  In particular, since $m' + 1$ is the largest magnitude of the slope of any edge of $G^\ast$, we see that
$\frac{\beta}{\alpha}(m' + 1) = 2n(m' + 1)$ is the largest magnitude of the slope of any edge of $Z$.  Let $M$ denote this slope, and note that $M \leq 2n^2$.
Note also that the absolute difference of two boundary edge slopes of $Z$ is at least $\frac{\beta}{\alpha} = 2n$.  It follows immediately that for each $i = 1,
\ldots, R$, the absolute difference of two boundary edge slopes of $Z_i$ is at least~$2n$.

Define $a_1' = z_1$ and $a_2' = z_2$.  Take $G_1'$ to consist of the single vertex $a_1'$, and take $G_2'$ to consist of the vertices $a_1', a_2'$, together with the line segment $a_1'a_2'$.  Now let $3 \leq i \leq n$, and suppose we have constructed $G_{i - 1}'$.  To define $a_i'$, we consider two cases, namely whether
$d_i(a_i) = 2$ or $d_i(a_i) > 2$.

\begin{figure}[t!]
 \begin{center}
  \begin{overpic}[height=4cm]{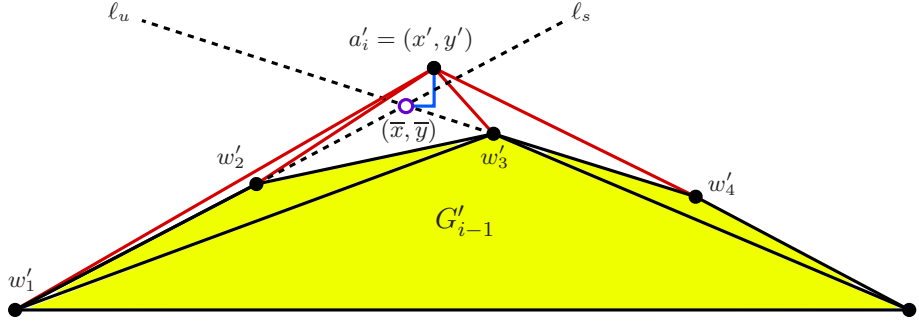}
  	 \put(47, 10){$G_{i - 1}'$}
	 \footnotesize
	  \put(41, 20){$(\overline{x}, \overline{y})$}
	  \put(37.5, 30){$a_i' = (x', y')$}
	  \put(0, 3.5){$w_1'$}
	  \put(23, 17){$w_2'$}
	  \put(52, 17){$w_3'$}
	  \put(77, 14){$w_4'$}
	  \put(11, 33){$\ell_u$}
	  \put(62, 33){$\ell_s$}
  \end{overpic}
  \caption{The construction of vertex $a_i'$ when $d_i(a_i) > 2$.  In this example, $d_i(a_i) = 4$.}
  \label{f:highdegree}
 \end{center}
\end{figure}

\medskip

\noindent
\textbf{Construction of $a_i'$, in the case $d_i(a_i) > 2$}. \ts If $d_i(a_i) > 2$, then let $w_1, \ldots, w_k$ denote the neighbors of $a_i$ in $G_i$,
and let $w_1', \ldots, w_k'$ denote the corresponding vertices of $G_{i - 1}'$, ordered from left to right.  Let $s$ denote the slope of the edge $w_1'w_2'$, and
$u$ the slope of the edge $w_{k - 1}'w_k'$.  Let $\ell_s$ denote the line of slope $s$ containing the point $w_1'$, and $\ell_u$ the line of slope $u$ containing
the point $w_k'$.  We denote by $(\overline{x}, \overline{y})$ the point of intersection of the lines~$\ell_s$ and~$\ell_u$.  Let $x' = \lceil \overline{x}
\rceil$ and $\gamma = x' - \overline{x}$, and let $y' = \lceil \overline{y} \rceil + \lfloor \gamma s \rfloor + 1$.  We now define $a_i' = (x', y')$ (see
Figure~\ref{f:highdegree}).  We obtain $G_i'$ from $G_{i - 1}'$ by adding the vertex $a_i'$, together with all line segments between $a_i'$ and the vertices
$w_1', \ldots, w_k'$.

\medskip

\noindent
\textbf{Construction of $a_i'$, in the case $d_i(a_i) = 2$}. \ts If $d_i(a_i) = 2$, then let $\Delta$ be the triangle of $Z_i$ containing $z_{\rho(i)}$.
Let $w_1, w_2$ denote the boundary neighbors of $a_i$ in $G_i$, and let $w_1', w_2'$ denote the corresponding vertices of $G_{i - 1}'$, so that $w_1'$ lies to
the left of $w_2'$.  We are going to construct a triangle $\Delta'$, such that $\Delta'$ is the image of $\Delta$ under an affine map which is the composition of
a uniform scaling and a translation.  Furthermore, we will place $\Delta'$ in a specific position with respect to the triangulation $G_{i - 1}'$.  In particular,
if $v_1, v_2, v_3$ denote the vertices of $\Delta'$, we require that $x(v_1) = x(w_1')$, $v_2 = w_2'$, and $x(v_1) < x(v_3) < x(v_2)$ (see
Figure~\ref{f:degree2}).  It is easily verified that these conditions, together with the requirement that $\Delta'$ is a scaled, translated copy of $\Delta$,
determine the vertices $v_1, v_2, v_3$ of $\Delta'$ uniquely.

To define the new vertex $a_i'$, we start by applying a vertical shearing to the triangle $\Delta'$, namely the unique shearing that fixes $v_2 = w_2'$ and maps~$v_1$ to~$w_1'$.  We will denote the image of $v_3$ under this shearing by $\overline{v_3}$.  So in terms of the vertices of $Z_{\rho(i)}$ and $G_{i - 1}'$, the point $\overline{v_3}$ is defined as follows.

Let $\eta = y(v_1) - y(w_1')$.  Let $b_1$ and $b_2$ denote the left and right boundary neighbors, respectively, of $z_{\rho(i)}$ in $Z_{\rho(i)}$.
Notice that the edge $b_1b_2 = Z(w_1'w_2')$ is a boundary edge of $Z_{h(i - 1)}$.  We will define a ratio $\kappa$, which describes how far away
$z_{\rho(i)}$ is from $b_2$, in the $x$ direction.  That is, we define \[\kappa = \frac{x(b_2) - x(z_{\rho(i)})}{x(b_2) - x(b_1)}.\]  We may now define
$\overline{v_3} = (x(v_3), y(v_3) - \kappa \eta)$.

Note that $z_3$ is the apex of $Z_i$ for all $i = 1, \ldots, R$, and $x(z_3) = \alpha x(a_3^\ast) = \alpha \cdot 0 = 0$.  So if $x(z_{\rho(i)}) < 0$ then
$x(z_{\rho(i)})$ lies to the left of the apex of $Z_i$, and if $x(z_{\rho(i)}) > 0$ then $z_{\rho(i)}$ lies to the right of the apex.  With this understanding,
we let
\[v_3' = \begin{cases}
(\lfloor x(\overline{v_3}) \rfloor, \lceil y(\overline{v_3}) \rceil) & \text{if $x(z_{\rho(i)}) \leq 0$,} \\
(\lceil x(\overline{v_3}) \rceil, \lceil y(\overline{v_3}) \rceil) & \text{if $x(z_{\rho(i)}) > 0$.}
\end{cases}\]
We now define $a_i' = v_3'$.  We obtain $G_i'$ from $G_{i - 1}'$ by adding the vertex $a_i'$ and the two line segments between $a_i'$ and the vertices $w_1',
w_2'$.

\begin{figure}[t!]
 \begin{center}
  \begin{overpic}[height=6cm]{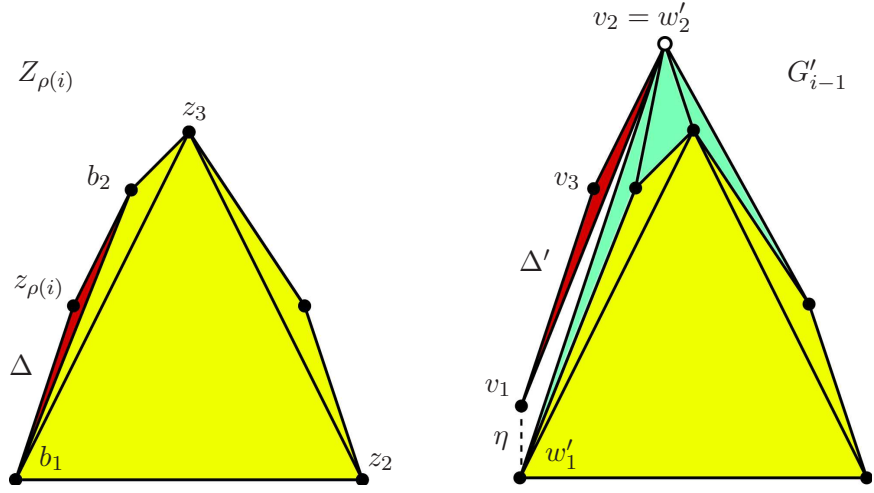}
  	\put(1, 47){$Z_{\rho(i)}$}
	 \put(90, 47){$G_{i - 1}'$}
	  \put(.5, 23){$z_{\rho(i)}$}
	  \put(3.5, 2.5){$b_1$}
	  \put(41.5, 2.5){$z_2$}
	  \put(20, 43){$z_3$}
	  \put(9, 35){$b_2$}
	  \put(55, 11){$v_1$}
	  \put(67.5, 53.5){$v_2 = w_2'$}
	  \put(63, 35){$v_3$}
	  \put(56, 5){$\eta$}
	  \put(62, 3){$w_1'$}
	  \put(0, 13){$\Delta$}
	  \put(59, 25){$\Delta'$}
  \end{overpic}
  \caption{The first stage of the construction of vertex $a_i'$ when $d_i(a_i) = 2$.  The red triangles $\Delta$ and $\Delta'$ differ by a uniform scaling and a
  translation.}
  \label{f:degree2}
 \end{center}
\end{figure}

\medskip

\noindent
\textbf{Verification of the construction.} \ts
We have now explicitly described the construction.  It remains to show that the above constructions
actually produce a \emph{convex} shedding sequence $\ba' = (a_1', \ldots, a_n')$ for $G'$, and that $G'$ lies in the grid size indicated.  Clearly, the horizontal
width of the grid remains constant throughout the construction.  Specifically, the triangulations $G_1', G_2', \ldots, G_n'$ all have the same width $\alpha 2(n
- 2)$, which is the width of the $Z_i$.  So to show that the construction is sequentially convex, and that the bound on the height of $G'$ is correct, we will
calculate how the boundary slopes are modified when we add the new vertex $a_i'$ in the two cases $d_i(a_i) = 2$ and $d_i(a_i) > 2$.

For each $3 \leq i \leq n$, let \textbf{$\cP(i)$} denote the conjunction of the following three properties, that we wish to show:
\begin{align*}
\P(i, 1). & \quad \text{For every boundary edge $e$ of $G_i'$, we have $x(e) \geq x(Z(e))$.} \\
\cP(i, 2). & \quad \text{For every boundary edge $e$ of $G_i'$, the slopes of $e$ and $Z(e)$ differ by at most $i$.} \\
\cP(i, 3). & \quad \text{$G_i' \sim G_i$ and $G_i'$ has convex shedding sequence $(a_1', \ldots a_i')$.}
\end{align*}
To prove $\cP(i)$ for each $i = 3, \ldots, n$, we proceed by induction on $i$.

For $i = 3$, note that $a_1' = z_1$, $a_2' = z_2$, and the vertex $a_3'$ is chosen so that in particular the triangle $(a_1'a_2'a_3')$ is a scaling of the
triangle $(z_1z_2z_3) = \F(Z_3)$.  This implies that $a_3' = z_3$.  Thus $G_3' = Z_3$, which immediately establishes $\cP(3)$.

Now let $i > 3$, and suppose that $\cP(i - 1)$ holds.  As in the construction, we consider separately the cases $d_i(a_i) > 2$ and $d_i(a_i) = 2$.

\medskip

\noindent
\textbf{Verification of $\cP(i)$ in the case $d_i(a_i) > 2$.} \ts Clearly, we have $Z(w_1'a_i') = Z(w_1'w_2')$ and $Z(a_i'w_k') = Z(w_{k - 1}'w_k')$.  Note that
$x(w_2') \leq \overline{x} \leq x(w_{k - 1}')$, and thus $\nobreak{x(w_2') \leq x(a_i') \leq x(w_{k - 1}')}$.    Therefore
\[x(w_1'a_i') = x(a_i') - x(w_1') \geq x(w_2') - x(w_1') = x(w_2'w_1') \geq x(Z(w_2'w_1')) = x(Z(w_1'a_i')),\] where the last inequality follows from $\cP(i - 1,1)$.  Similarly, we have $x(a_i'w_2') \geq x(Z(a_i'w_2'))$.  Thus $\cP(i, 1)$ holds.

We now show that the slopes of the boundary edges of $G_i'$ containing $a_i'$ differ only slightly from the slopes $s$ and $u$ defined in the above construction. That is, let $s'$ denote the slope of the line passing through $w_1'$ and $a_i' = (x', y')$, and let $u'$ denote the slope of the line passing through $a_i'$ and $w_k'$.  Since \[y'  - \overline{y} = (\lceil \overline{y} \rceil - y) + \lfloor \gamma s \rfloor + 1 \geq \lfloor \gamma s \rfloor + 1 > \gamma s,\] we clearly
have $s' - s > 0$.  On the other hand,
\[
\begin{aligned}
 & s' - s =  \dfrac{y' - y(w_1')}{\overline{x} + \gamma - x(w_1')} - s =  \dfrac{y' - y(w_1') - (\overline{x} - x(w_1'))s - \gamma s}{\overline{x} + \gamma -
 x(w_1')}  = \dfrac{(y' - \overline{y}) - \gamma s}{\overline{x} + \gamma - x(w_1')} \\
 &= \dfrac{(\lceil \overline{y} \rceil - y) + (\lfloor \gamma s \rfloor - \gamma s)  + 1}{\overline{x} + \gamma - x(w_1')} <  \dfrac{2}{\overline{x} + \gamma -
 x(w_1')}  \leq  \dfrac{2}{\overline{x} - x(w_1')}  \leq  \dfrac{2}{x(w_2') - x(w_1')} \\
&\leq  \dfrac{2}{x(Z(w_2'w_1'))} \leq \dfrac{2}{\alpha}  \leq  1.
 \end{aligned}
\]
In the last line we have used $\cP(i - 1, 1)$.

Since $s > u$, we have $y' - \overline{y} > \gamma s > \gamma u$.  From this, together with the fact that $x' \geq \overline{x}$, it follows that $u - u' > 0$.
By $\cP(i - 1, 2)$, we have $|s| \leq M + (i - 1)$.  Thus
\[
\begin{aligned}
& u - u' = \dfrac{y(w_k') - \overline{y}}{x(w_k') - \overline{x}} - \dfrac{y(w_k') - y'}{x(w_k') - x'}  \leq  \dfrac{y(w_k') - \overline{y}}{x(w_k') - x'} -
\dfrac{y(w_k') - y'}{x(w_k') - x'} =  \dfrac{y' - \overline{y}}{x(w_k') - x'} \\
&=  \dfrac{(\lceil \overline{y} \rceil - \overline{y}) + \lfloor \gamma s \rfloor + 1}{x(w_k') - x'} < \dfrac{|s| + 2}{x(w_k') - x'} \leq \dfrac{|s| + 2}{x(w_k')
- x(w_{k - 1}')} \leq \dfrac{|s| + 2}{x(Z(w_{k - 1}'w_k'))} \\
&\leq \dfrac{|s| + 2}{\alpha} \leq \dfrac{M + (i - 1) + 2}{\alpha} \leq \dfrac{2n^2 + i + 1}{\alpha} \leq \dfrac{2n^2 + n + 1}{\alpha} = 1.
\end{aligned}
\]
In the second line we have used $\cP(i - 1, 1)$.

Let $Z(s)$ denote the slope of the edge $Z(w_1'w_2')$, and let $Z(u)$ denote the slope of $Z(w_{k - 1}'w_k')$.  By $\cP(i - 1, 2)$, we have $|s - Z(s)| \leq i -
1$ and $|u - Z(u)| \leq i - 1$.  Thus
\[|s' - Z(s)| \leq |s' - s| + |s - Z(s)| \leq 1 + (i - 1) = i,\] and similarly $|u' - Z(u)| \leq i$, so $\cP(i, 2)$ holds.

Since  $s' - s > 0$ and $u - u' > 0$, each line segment $a_i'w_j'$ intersects $G_{i - 1}'$ only in the vertex $w_j'$, for all $j = 1, \ldots k$.  Thus $G_i'$ is
a plane triangulation, and $G_i' \sim G_i$.  It remains to show that $\F(G_i')$ is projectively convex, in order to establish $\cP(i, 3)$.  To do this, we will
show that when the slope $s$ is changed to $s'$ for example, convexity is preserved at the vertex $w_1'$.  That is, the slope $s'$, while greater than $s$, is
still less than the slope of the boundary edge to the left of $w_1'w_2'$.

Let $\hat{s}$ denote the slope of the boundary edge of $G_i'$ adjacent and to the left of $w_1'$, if such an edge exists, and let $\hat{u}$ denote the slope of
the boundary edge of $G_i'$ adjacent and to the right of $w_k'$, if such an edge exists.  Let $Z(\hat{s})$ and $Z(\hat{u})$ denote the boundary slopes of $Z_{h(i
- 1)}$ corresponding to $\hat{s}$ and $\hat{u}$, respectively.  By $\cP(i - 1, 2)$, we have $s - Z(s) < i - 1$ and $Z(\hat{s}) - \hat{s} < i - 1$.  Thus
\[
\begin{aligned}
&\hat{s} - s = (\hat{s} - Z(s)) - (s - Z(s)) \geq (\hat{s} - Z(s)) - (i - 1) \\
&= (Z(\hat{s}) - Z(s)) - (Z(\hat{s}) - \hat{s}) - (i - 1) \geq (Z(\hat{s}) - Z(s)) - 2(i - 1) \\
& \geq 2n - 2i + 2 \geq 2.
\end{aligned}
\]
We conclude that
\[\hat{s} - s' = (\hat{s} - s) - (s' - s) \geq (\hat{s} - s) - 1 \geq 2 - 1 = 1 > 0.\]

An analogous calculation shows that $u' - \hat{u} > 0$.  Thus $w_1'$ and $w_k'$ are convex vertices of $G_i'$.  Because the region $\F(G_{i - 1}')$ is
projectively convex by $\cP(i - 1, 3)$, we conclude that $\F(G_i')$ is projectively convex.  The sequence $(a_1', \ldots, a_{i - 1}')$ is a convex shedding
sequence for $G_{i - 1}'$ by $\cP(i - 1, 3)$, hence $(a_1', \ldots, a_i')$ is a convex shedding sequence for $G_i'$.  Thus $\cP(i, 3)$ holds.  We have now
established $\cP(i)$ in the case that $d_i(a_i) > 2$.

\medskip

\noindent
\textbf{Verification of $\cP(i)$ in the case $d_i(a_i) = 2$.} \ts
First note that by $\cP(i - 1, 1)$, we have \[x(v_1v_2) = x(w_1'w_2') \geq
x(Z(w_1'w_2')) = x(b_1b_2).\]  This implies that $x(w_1'\overline{v_3}) = x(v_1v_3) \geq x(b_1z_{\rho(i)})$, because $\Delta'$ is a scaled, translated copy of
$\Delta$.  Since $x(a_i') = \lfloor x(\overline{v_3}) \rfloor$, and $x(b_1z_{\rho(i)})$ and $x(w_1')$ are integers, we also have \[x(w_1'a_i') \geq
x(b_1z_{\rho(i)}) = x(Z(w_1'a_i')).\]  Similarly, we obtain $x(a_i'w_2') \geq x(z_{\rho(i)}b_2) = x(Z(a_i'w_2'))$.  Thus $\cP(i, 1)$ holds.

We now consider two relevant slopes in the construction of $G_i'$.  Let $r$ denote the slope of the edge $w_1'w_2'$ of $G_{i - 1}'$, and let $Z(r)$ denote the
slope of the corresponding edge $Z(w_1'w_2') = b_1b_2$ of $Z_{h(i - 1)}$.  Since the triangle $\Delta'$ is a scaled, translated copy of $\Delta$, we see that
$Z(r)$ is also the slope of the edge $v_1v_2$ of~$\Delta'$.  Let $\varepsilon = r - Z(r)$, and note that

\[\varepsilon = \frac{y(w_2') - y(w_1')}{x(w_2') - x(w_1')} - \frac{y(v_2) - y(v_1)}{x(v_2) - x(v_1)} = \frac{y(w_2') - y(w_1')}{x(v_2) - x(v_1)} - \frac{y(w_2')
- y(v_1)}{x(v_2) - x(v_1)} = \frac{\eta}{x(v_2) - x(v_1)}.\]

We consider another important pair of slopes arising in the construction of $G_i'$, together with the corresponding pair of slopes of $Z_{\rho(i)}$.
Specifically, let $q_1$ and $q_2$ denote the slopes of the line segments $v_1v_3$ and $v_3v_2$, respectively.  Since $\Delta'$ is a scaled, translated copy of
$\Delta$, these slopes $q_1$ and $q_2$ are also the slopes of the boundary edges $b_1z_{\rho(i)}$ and $z_{\rho(i)}b_2$ of $Z_{\rho(i)}$, respectively.  Let
$\overline{q_1}$ and $\overline{q_2}$ denote the slopes of the line segments $\overline{v_3}w_1'$ and $\overline{v_3}w_2'$, respectively.  We may think of
$\overline{q_1}$ and $\overline{q_2}$ as \emph{modifications} of the slopes $q_1$ and $q_2$, which arise from replacing the vertices $v_1$ and $v_3$ of $\Delta'$
with the vertices $w_1'$ and $\overline{v_3}$, respectively.  A consequence of our definitions is that $\overline{q_1} - q_1 = \overline{q_2} - q_2 =
\varepsilon$.  Indeed,
\[
\begin{aligned}
& \overline{q_1} - q_1 =  \frac{y(\overline{v_3}) - y(w_1')}{x(\overline{v_3}) - x(w_1')} - \frac{y(v_3) - y(v_1)}{x(v_3) - x(v_1)} = \frac{y(\overline{v_3}) -
y(w_1')}{x(v_3) - x(v_1)} - \frac{y(v_3) - y(v_1)}{x(v_3) - x(v_1)} \\
&= \frac{(y(v_1) - y(w_1')) + (y(\overline{v_3}) - y(v_3))}{x(v_3) - x(v_1)} =  \frac{\eta - \kappa \eta}{x(v_3) - x(v_1)} = (1 - \kappa)\frac{\eta}{x(v_3) -
x(v_1)} \\
&= \frac{x(z_{\rho(i)}) - x(b_1)}{x(b_2) - x(b_1)} \cdot \frac{\eta}{x(v_3) - x(v_1)} = \frac{x(v_3) - x(v_1)}{x(v_2) - x(v_1)} \cdot \frac{\eta}{x(v_3) -
x(v_1)} \\
&=  \frac{\eta}{x(v_2) - x(v_1)} =  \varepsilon.
\end{aligned}
\]

In the third line, we have used the fact that $\Delta'$ is a scaled, translated copy of~$\Delta$.
An analogous calculation shows that $\overline{q_2} - q_2 =
\varepsilon$.

The result is that if $\varepsilon$ is the difference between a current boundary slope $r$ of $G_{i - 1}'$ and the corresponding boundary slope $Z(r)$ of $Z_{h(i- 1)}$, then this difference is propagated, but not increased, by the addition of $a_i'$.  That is, the slopes of the boundary edges of $G_i'$ adjacent to $a_i'$ will differ by $\varepsilon$ from the corresponding boundary slopes of $Z_{\rho(i)}$.

We now investigate how the slopes $\overline{q_1}$ and $\overline{q_2}$ change when we move $\overline{v_3}$ to the integer point $v_3' = a_i'$.  We may assume
without loss of generality that $x(z_{\rho(i)}) < 0$, and hence that $a_i' = (\lfloor x(v_3) \rfloor, \lceil y(v_3) \rceil)$, as the other case is treated
identically.  We will let $q_1'$ and $q_2'$ denote the slopes that result from replacing $\overline{v_3}$ with $v_3' = a_i'$.  That is, let $q_1'$ be the slope
of the line passing through $a_i'$ and $w_1'$, and let $q_2'$ be the slope of the line passing through $a_i'$ and $w_2'$.

Since $z_{\rho(i)}$ lies to the left of the apex of $Z_{\rho(i)}$, the vertex $a_i'$ lies below and to the left of the apex of $G_i'$.  Therefore we clearly have $q_1' - \overline{q_1} > 0$ and $\overline{q_2} - q_2' > 0$.  By $\cP(i - 1, 2)$, we have $|\varepsilon| < i - 1$.  Therefore we obtain
\begin{align*}
& q_1' - \overline{q_1} =  \frac{\lceil y(\overline{v_3}) \rceil - y(w_1')}{\lfloor x(\overline{v_3}) \rfloor - x(w_1')} - \overline{q_1} <
\frac{y(\overline{v_3}) - y(w_1') + 1}{x(\overline{v_3}) - x(w_1') - 1} - \overline{q_1} \\
& = \frac{y(\overline{v_3}) - y(w_1') + 1 - (x(\overline{v_3}) - x(w_1'))\overline{q_1} + \overline{q_1}}{x(\overline{v_3}) - x(w_1') - 1} =  \frac{1 +
\overline{q_1}}{x(\overline{v_3}) - x(w_1') - 1} =  \frac{1 + \overline{q_1}}{x(v_3) - x(v_1) - 1} \\
& \leq  \frac{1 + \overline{q_1}}{x(z_{\rho(i)}) - x(b_1) - 1} \leq  \frac{1 + \overline{q_1}}{\alpha - 1} =  \frac{1 + q_1 + \varepsilon}{\alpha - 1} \leq
\frac{1 + M + \varepsilon}{\alpha - 1} \leq \frac{1 + M + (i - 1)}{\alpha - 1} \\
& \leq \frac{2n^2 + n}{\alpha - 1} = 1.
\end{align*}
In the third line we have used the fact that $x(v_3) - x(v_1) \geq x(z_{\rho(i)}) - x(b_1)$, which we demonstrated above in order to establish $\cP(i, 1)$.  An
analogous calculation shows that $\overline{q_2} - q_2' < 1$.

We may now compute
\[|q_1' - q_1| \leq |q_1' - \overline{q_1}| + |\overline{q_1} - q_1| = |q_1' - \overline{q_1}| + \varepsilon < 1 + \varepsilon \leq 1 + (i - 1) = i.\]
An identical calculation shows that $|q_2' - q_2| \leq i$.  Note that $q_1'$ is the slope of the boundary edge $w_1'a_i'$ of $G_i'$ and $q_1$ is the slope of
the edge $b_1z_{\rho(i)} = Z(w_1'a_i')$, and similarly for $q_2'$ and $q_2$.  This establishes $\cP(i, 2)$.

From the construction of $a_i'$ it is clear that the line segments $w_1'a_i'$ and $a_i'w_2'$ intersect $G_{i - 1}'$ only in the vertices $w_1'$ and~$w_2'$.  Thus $G_i'$ is a plane triangulation, and $G_i' \sim G_i$.  To show that $\F(G_i')$ is projectively convex, we carry out a calculation similar to that of the
$d_i(a_i) > 2$ case.

Let $\hat{q_1}$ denote the slope of the boundary edge of $G_i'$ adjacent and to the left of $w_1'$, if such an edge exists, and let $\hat{q_2}$ denote the slope
of the boundary edge of $G_i'$ adjacent and to the right of $w_2'$, if such an edge exists.  Let $Z(\hat{q_1})$ and $Z(\hat{q_2})$ denote the boundary slopes of
$Z_{h(i - 1)}$ corresponding to $\hat{q_1}$ and $\hat{q_2}$, respectively.  By $\cP(i - 1, 2)$, we have $\overline{q_1} - q_1 = \epsilon \leq i - 1$ and
$Z(\hat{q_1}) - \hat{q_1} \leq i - 1$.  Thus
\[
\begin{aligned}
&\hat{q_1} - \overline{q_1} = (\hat{q_1} - q_1) - (\overline{q_1} - q_1) \geq (\hat{q_1} - q_1) - (i - 1) \\
&= (Z(\hat{q_1}) - q_1) - (Z(\hat{q_1}) - \hat{q_1}) - (i - 1) \geq (Z(\hat{q_1}) - q_1) - 2(i - 1) \\
&\geq 2n - 2i + 2 \geq 2.
\end{aligned}
\]
We conclude that
\[\hat{q_1} - q_1' = (\hat{q_1} - \overline{q_1}) - (q_1' - \overline{q_1}) \geq (\hat{q_1} - \overline{q_1}) - 1 \geq 2 - 1 = 1 > 0.\]
An analogous calculation shows that $q_2' - \hat{q_2} > 0$.  Thus $w_1'$ and $w_2'$ are convex vertices of $G_i'$.  Because the region $\F(G_{i - 1}')$ is
projectively convex by $\cP(i - 1, 3)$, we conclude that $\F(G_i')$ is projectively convex.  By $\cP(i - 1, 3)$, the sequence $(a_1', \ldots, a_{i - 1}')$ is a
convex shedding sequence for $G_{i - 1}'$, hence $(a_1', \ldots, a_i')$ is a convex shedding sequence for $G_i'$.  Thus $\cP(i, 3)$ holds.  We have now
established $\cP(i)$ in the case that $d_i(a_i) = 2$.  This completes the induction, and we conclude that $\cP(i)$ holds for all $3 \leq i \leq n$.  Thus the
triangulation $G' = G_n'$ is a sequentially convex embedding of $G$, with convex shedding sequence $\ba' = (a_1', \ldots, a_n')$.

We have immediately that the $x$ dimension of $G'$ is \[\alpha2(n - 2) = (2n^2 + n + 1)(2n - 4) =  4 n^3 - 6 n^2 - 2n - 4 \leq 4n^3.\]  Since $\cP(n, 2)$ holds,
we conclude that the largest absolute value of a boundary slope of $G'$ is at most $M + n \leq 2n^2 + n$.  Thus the $y$ dimension of $G'$ is at most \[\alpha2(n
- 2)(2n^2 + n) = (4 n^3 - 6 n^2 - 2n - 4)(2n^2 + n) = 8n^5 - 8n^4 - 10n^3 - 10n^2 - 4n \leq 8n^5.\]  Therefore $G'$ is embedded in a $4n^3 \times 8n^5$ integer
grid.  \end{proof}

\bigskip

\section{The shedding diameter}\label{s:diam}

\noindent
Let $G = (V, E)$ be a plane triangulation and let $\A_G$ denote the set of all shedding sequences for $G$.  For $\ba = (a_1, \ldots, a_n) \in \A_G$, we write
$a_j \rightarrow_\ba a_i$ if $a_j$ is adjacent to $a_i$ in~$G_i(\ba)$.  Then we define the \emph{height} of each vertex $a_i$ recursively, by
\begin{equation*}
\tau(a_i, \ba) = \begin{cases}
i & i \leq 3 \\
1 + \max\{\tau(a_j, \ba) \ | \ a_j \rightarrow_\ba a_i\} & i > 3
\end{cases}.
\end{equation*}
We define the \emph{height} of the shedding sequence $\ba \in \A_G$ by \[\tau(\ba) = \max_i\tau(a_i, \ba),\] and the \emph{shedding diameter} of $G$ by \[\tau(G)
= \min_{\ba \in \A_G} \tau(\ba).\]

Taking the transitive closure of the relation $\rightarrow_\ba$, we obtain a partial order $\preceq_\ba$ on the vertices of $G$.  The height $\tau(\ba)$ of the
sequence $\ba$ is then precisely the height of $\preceq_\ba$.  That is, $\tau(\ba)$ is the maximal length of a chain in $\preceq_\ba$.

The next lemma involves the following intuitive notion.  Let $\pi : \R^3 \rightarrow \R^2$ denote the coordinate projection $\pi(x, y, z) = (x, y)$.  We say that
a convex polyhedron $P \subset \R^3$ (possibly unbounded) \emph{projects vertically} onto a geometric plane graph $G$, if $\pi(P) = \F(G)$, and $\pi$ induces an
isomorphism on the face structures of $P$ and $G$.  This last condition means that $w_1, \ldots, w_k$ are the vertices of a facet ($2$-face) of $P$ if and only
if $\pi(w_1), \ldots, \pi(w_k)$ are the vertices of a face of $G$.

\begin{lemma}  Let $G$ be a plane triangulation with $n$ vertices and shedding sequence $\ba \in \A_G$, embedded as in Theorem \ref{plane}, so that $\ba$ is a
convex shedding sequence for $G$.  Then there is a convex polyhedron $P_i$ that projects vertically onto $G_i$, for each $i = 3, \ldots, n$.  Furthermore, if
$h(a_i)$ denotes the height of the vertex of $P_i$ projecting to $a_i$, then we may choose $h(a_i)$ to be an integer such that $h(a_i) \leq 499 n^8 m_i + 1$,
where \[m_i~=~\max\{h(a_j) \ | \ a_j \rightarrow_\ba a_i\}.\] \label{height} \end{lemma}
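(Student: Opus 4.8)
The strategy is to build the polyhedra $P_i$ inductively, mirroring the inductive construction of the embeddings $G_i'$ in Theorem~\ref{plane}. The base case $i=3$ is the single triangle $a_1'a_2'a_3'$, which lifts trivially to any plane in $\R^3$ (so we may take $h(a_1), h(a_2), h(a_3)$ to be small integers). For the inductive step, suppose $P_{i-1}$ projects vertically onto $G_{i-1}'$. The new vertex $a_i'$ has boundary neighbors $w_1', \ldots, w_k'$ in $G_{i-1}'$ (with $k=2$ when $d_i(a_i)=2$), and since $\ba'$ is a convex shedding sequence, these neighbors form a convex chain on the boundary of $\F(G_{i-1}')$, lifted to a convex chain of edges on $P_{i-1}$. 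The plan is to choose $h(a_i)$ so that the lifted point $\widehat{a_i} = (a_i', h(a_i))$, together with the lifted edges $\widehat{a_i}\,\widehat{w_j'}$, can be added to $P_{i-1}$ while preserving convexity; then $P_i$ is $P_{i-1}$ with the old facets over $\F(G_{i-1}')$ near $a_i'$ removed and the new triangular facets $(\widehat{a_i}, \widehat{w_j'}, \widehat{w_{j+1}'})$ glued in. Convexity at the new facets is automatic once $\widehat{a_i}$ lies strictly below every plane spanned by a facet of $P_{i-1}$ adjacent to the gap; convexity along the boundary of the gap is where the quantitative estimate enters.

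The key step is the quantitative bound. Placing $\widehat{a_i}$ below the relevant facet-planes requires $h(a_i)$ to be at least as negative as — or, after reflecting, within a controlled amount of — the values $h(w_j')$, and the ``dihedral slack'' that must be overcome is governed by (i) the horizontal distances $x(e)$ between the boundary vertices, which by $\cP(i,1)$ are bounded below by the corresponding distances in $Z$, hence by $\alpha \geq 1$, and bounded above by the total width $\alpha \cdot 2(n-2) \leq 4n^3$; and (ii) the boundary slopes in the $G_i'$, which by $\cP(i,2)$ and the slope bound for $Z$ are at most $M + n \leq 2n^2 + n$ in absolute value, so consecutive facet planes over the gap differ in their linear parts by $O(n^2)$ in the $x$-gradient. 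A direct computation of the intersection height of two such adjacent supporting planes over the $x$-coordinate of $a_i'$ shows the required drop below $m_i = \max\{h(a_j) : a_j \to_\ba a_i\}$ is at most a polynomial in $n$ of degree~$8$; rounding $\widehat{a_i}$ to an integer height costs at most an additive constant. One then checks the numerology gives $h(a_i) \leq 499\,n^8 m_i + 1$, with the constant $499$ absorbing the various rounding terms and the slope/width products. The case $d_i(a_i) > 2$ needs the extra observation that all of $w_1', \ldots, w_k'$ lie on a single face chain whose lifted edges are coplanar or nearly so — in fact, since $a_i$ has $k$ neighbors forming $k-1$ faces of $G_i$, and $P_{i-1}$ restricted to that region is already a convex surface, the ``gap'' we remove is exactly the union of the corresponding facets, and the only genuine constraints come from the two extreme facets adjacent to $w_1'$ and $w_k'$, just as in the two slopes $\hat s, \hat u$ controlled in the proof of Theorem~\ref{plane}.

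The main obstacle I anticipate is bookkeeping the interaction between the $x$-scaling factor $\alpha$, the $y$-scaling factor $\beta$, and the height coordinate: the supporting planes of $P_{i-1}$ have $x$-gradients of size up to $\sim 2n^2$ and $y$-gradients that could in principle be large, and one must verify that the height needed to clear all adjacent planes over the single point $a_i'$ does not blow up multiplicatively in $i$ beyond the allowed single factor $m_i$. Resolving this requires showing that only the \emph{boundary} facets adjacent to the new vertex constrain $h(a_i)$ — interior facets of $P_{i-1}$ are automatically cleared because $a_i'$ lies inside $\F(G_{i-1}')$ only in its projection but the lift must stay below, which follows from convexity of $P_{i-1}$ and the fact that $a_i'$ projects into a single facet's shadow after the local retriangulation. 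So the argument reduces to a finite, local, explicit estimate at each step, and the polynomial degree $8$ is exactly $3+5 = \deg(\text{width}) + \deg(y\text{-dimension})$, consistent with the grid dimensions from Theorem~\ref{plane}. I would present the induction, isolate the one-step height estimate as the crux, and relegate the constant-chasing to a short explicit computation.
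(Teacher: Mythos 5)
Your overall skeleton matches the paper's proof (induct on $i$, place the lifted vertex so that it clears the supporting planes of the facets coming from the faces in $\Sc_i$, take $P_i=\conv(P_{i-1}\cup \ell_i)$ with $\ell_i$ a vertical ray, and bound the height using the $4n^3\times 8n^5$ grid), but your description of the inductive step gets the geometry wrong. Since $a_i$ is a shedding vertex being re-attached, $a_i'$ lies \emph{outside} the convex polygon $\F(G_{i-1}')$, the $k-1$ new triangles $a_i'w_j'w_{j+1}'$ are faces of $G_i$ that were never faces of $G_{i-1}$, and no facet of $P_{i-1}$ is removed: one only requires $\varphi(a_i)$ to lie above (or, in your reflected convention, below) the planes of the facets over $\Sc_i$, so that every old facet survives and the new facets project exactly onto the new faces. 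Your statements that ``the old facets over $\F(G_{i-1}')$ near $a_i'$ are removed,'' that ``the gap we remove is exactly the union of the corresponding facets,'' and that ``$a_i'$ lies inside $\F(G_{i-1}')$ \ldots in its projection'' describe adding a vertex over the interior; carried out literally this would destroy the projection property, because every face of $G_{i-1}$ is still a face of $G_i$ and its facet must persist in $P_i$.

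The more serious gap is the quantitative crux, which you flag but never resolve. You rightly worry that the facet planes of $P_{i-1}$ could have large gradients, but your proposed fix (only facets adjacent to the new vertex constrain $h(a_i)$, indeed only the two extreme ones) addresses \emph{which} planes matter, not \emph{how steep} they can be; the planar slope bound $M+n\leq 2n^2+n$ and the width bounds from Theorem~\ref{plane} control the two-dimensional embedding, not the coefficients of a three-dimensional facet plane, whose $y$-gradient depends on the accumulated heights and on the shape of the projected triangle (which can be very thin in this construction). The missing idea is integrality of the planar coordinates: for a facet over a triangle $v_1v_2v_3$, the matrix $A$ with rows $(x_j,y_j,1)$ is an invertible integer matrix, so $|\det A|\geq 1$, and Cramer's rule together with Hadamard's inequality bounds the plane coefficients by $O(n^5m_i)$, $O(n^3m_i)$ and $O(n^8m_i)$; evaluating at $(x(a_i),y(a_i))$ with $|x|\leq 4n^3$, $|y|\leq 8n^5$ gives $z_0\leq 288\sqrt{3}\,n^8m_i\leq 499\,n^8m_i$, hence $h(a_i)\leq 499\,n^8m_i+1$. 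Without a lower bound on $|\det A|$ (equivalently on the area of the projected triangle), your ``intersection of two adjacent supporting planes'' computation does not produce $n^8m_i$, and the observation that $8=3+5$ is a feature of the correct computation rather than a substitute for it. As written, the height estimate, which is the entire content of the lemma, remains unproven.
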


\begin{proof}  We proceed by induction on $i$.  Let $h(v)$ denote the height assigned to the vertex $v \in V(G)$, and let $\varphi(v) = (x(v), y(v), h(v)) \in
\R^3$ denote the point of $\R^3$ projecting vertically to $v$.  We define $h(a_1) = h(a_2) = h(a_3) = 0$, and let \[P_3 = \{(x, y, z) \in \R^3 \ | \ (x, y) \in
\conv(a_1, a_2, a_3), z \geq 0\}.\]  That is, $P_3$ is the unbounded prism with triangular face $a_1a_2a_3$ and lateral edges extending in the positive vertical
direction, parallel to the $z$-axis.

If $i > 3$, then by the induction hypothesis, there is a convex polyhedron $P_{i - 1}$ that projects vertically onto $G_{i - 1}$.  So in particular, the vertices
of $P_{i - 1}$ are $\varphi(a_1), \varphi(a_2), \ldots, \varphi(a_{i - 1})$.  To obtain a lifting of $G_i$, we must choose $h(a_i)$ properly.  Namely, we must
choose $h(a_i)$ large enough to ensure that $\varphi(a_i)$ is in convex position with respect to $\varphi(a_1), \varphi(a_2), \ldots, \varphi(a_{i - 1})$.

Let $\Sc_i$ denote the set of faces of $G_{i - 1}$ having a vertex $v$ such that $v \rightarrow_\ba a_i$, and let $\varphi(\Sc_i)$ denote the facets of $P_{i - 1}$
that project vertically to the faces of $\Sc_i$.  We choose the height $h(a_i)$ large enough so that for every facet $F \in \varphi(\Sc_i)$, the point
$\varphi(a_i)$ lies \emph{above} the hyperplane spanned by $F$.  That is, we require that $\varphi(a_i) - (0, 0, k)$ is coplanar with $F$ for some $k > 0$.

Let $\ell_i$ denote the ray with vertex $\varphi(a_i)$ and extending in the positive vertical direction, parallel to the $z$-axis.  Then we define a convex
polyhedron $P_i = \conv(P_{i - 1} \cup \ell_i)$.  By the choice of $h(a_i)$, the point $\varphi(a_i)$ lies above all facet hyperplanes of $\varphi(\Sc_i)$, hence
above all facet hyperplanes of $P_{i - 1}$.  Thus the vertices of $P_i$ are $\varphi(a_1), \varphi(a_2), \ldots \varphi(a_i)$, and $\varphi(a_i)$ is not a vertex
of any facet of $P_{i - 1}$.  This last fact implies, because $G_i$ is a triangulation, that all faces in $\cF(G_i) \sm \cF(G_{i - 1})$ are obtained from the
projection of the new facets of $P_i$.  On the other hand, because $\F(G_i)$ is convex, all new facets of $P_i$ project vertically to faces in $\cF(G_i) \sm
\cF(G_{i - 1})$.   Since $P_{i - 1}$ projects vertically onto $G_{i - 1}$, these last two statements imply that $P_i$ projects vertically onto $G_i$.

Now we determine an upper bound on the height $h(a_i)$ necessary for the above construction.  To do this, we determine an upper bound on the coordinate $z$ for
which $(x(a_i), y(a_i), z)$ is coplanar with some facet in $\varphi(\Sc_i)$.  If we take $h(a_i)$ to be any integer greater than this upper bound, then $\varphi(a_i)$
lies above the hyperplane of every facet in $\varphi(\Sc_i)$.

We write $x_0 = x(a_i)$, $y_0 = y(a_i)$, and let $z_0 > 0$.  Fix $F \in \Sc_i$ and let $v_1, v_2, v_3$ denote the vertices of $F$. Let $(x_j, y_j, z_j) \in \R^3$
denote the coordinates of $\varphi(v_j)$.  So in particular $(x_0, y_0) = a_i$, and $(x_j, y_j) = v_j$ for $j = 1, 2, 3$.  Suppose that $(x_0, y_0, z_0)$ is
coplanar with $\varphi(v_1), \varphi(v_2), \varphi(v_3)$.  This means that \[z_0 = c_1x_0 + c_2y_0 + c_3,\] where $c_1, c_2, c_3$ satisfy
\[
\left(\begin{array}{ccc}
	x_1 & y_1 & 1 \\
	x_2 & y_2 & 1 \\
	x_3 & y_3 & 1
	\end{array}\right)
	\left(\begin{array}{c}
	c_1 \\
	c_2 \\
	c_3
	\end{array}\right)
	=
	\left(\begin{array}{c}
	z_1 \\
	z_2 \\
	z_3
	\end{array}\right).
\]

Let $A$ denote the matrix on the left side of this equation, and write \[\bx = (x_1, x_2, x_3), \quad \quad \by = (y_1, y_2, y_3), \quad \quad \bz = (z_1, z_2,
z_3).\]  By Cramer's rule, $c_i = \frac{\det(A_i)}{\det(A)}$, where $A_i$ is obtained by replacing the $i^{\text{th}}$ column of $A$ with~$\bz^T$.

Since $G$ is embedded as in Theorem~\ref{plane}, the vertices of $G$ lie in a $4n^3 \times 8n^5$ integer grid.  Furthermore, from the construction of
Theorem~\ref{plane}, the point $(0, 0)$ is contained in the edge $a_1a_2$ of $G$.  This implies that $|x_j| \leq 4n^3$ and $|y_j| \leq 8n^5$ for each $j = 0, 1,
2, 3$.  Therefore \[\|\bx\| \leq \sqrt{3}\max_{1 \leq j \leq 3}|x_j| \leq 4\sqrt{3}n^3 \quad \quad \text{and} \quad \quad \|\by\| \leq \sqrt{3}\max_{1 \leq j
\leq 3}|y_j| \leq 8\sqrt{3}n^5.\]  Note also that
\[\|\bz\| \leq \sqrt{3}\max_{1 \leq j \leq 3}|z_i| \leq \sqrt{3}m_i.\]

Since $A$ is an invertible integer matrix, we have $|\det(A)| \geq 1$.  Thus
\[
\begin{aligned}
& \frac{|\det(A_1)|}{|\det(A)|} \leq  |\det(A_1)| \\
&\leq  \|(1, 1, 1)\|\|\bz\|\|\by\| =  \sqrt{3}\|\bz\|\|\by\| \\
&\leq  \sqrt{3}(\sqrt{3} m_i)(8\sqrt{3}n^5) =  24\sqrt{3} n^5 m_i,
\end{aligned}
\]
where we have used Hadamard's inequality in the second line.

By a similar argument,
\[\frac{|\det(A_2)|}{|\det(A)|} \leq 12\sqrt{3} n^3 m_i \quad \quad \text{and} \quad \quad \frac{|\det(A_3)|}{|\det(A)|} \leq 96\sqrt{3}n^{8} m_i.\]

Thus when $(x_0, y_0, z_0)$ is coplanar with $\varphi(v_1), \varphi(v_2), \varphi(v_3)$, we have
\begin{align*}
& z_0 =  \dfrac{\det(A_1)}{\det(A)}x_0 + \dfrac{\det(A_2)}{\det(A)}y_0 + \dfrac{\det(A_3)}{\det(A)} \\
& \leq \dfrac{|\det(A_1)|}{|\det(A)|}|x_0| + \dfrac{|\det(A_2)|}{|\det(A)|}|y_0| + \dfrac{|\det(A_3)|}{|\det(A)|} \\
& \leq 24\sqrt{3} n^5 m_i(4n^3) + 12\sqrt{3} n^3 m_i (8n^5) + 96\sqrt{3}n^{8} m_i \\
&= 288\sqrt{3} n^{8}m_i \leq  499 n^8 m_i.
\end{align*}

So letting $z_0$ be the smallest integer greater than $499n^8 m_i$ will ensure that $(x_0, y_0, z_0)$ lies above the hyperplane containing $\varphi(v_1),
\varphi(v_2), \varphi(v_3)$.  Thus we may take $h(a_i) \leq 499n^8 m_i + 1$, as desired. \end{proof}

\begin{thm}  Let $G$ be a plane triangulation with $n$ vertices.  Then $G$ is the vertical projection of a convex $3$-polyhedron with vertices lying in a $4n^3
\times 8n^5 \times (500n^8)^{\tau(G)}$ integer grid. \label{t:size} \end{thm}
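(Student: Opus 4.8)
The plan is to combine Theorem~\ref{plane} with Lemma~\ref{height} and then estimate the recursion for the heights. First I would invoke Theorem~\ref{plane}: choose any boundary edge $uv$ of $G$ and, by Lemma~\ref{shedding}, a shedding sequence $\ba = (a_1, \ldots, a_n)$ with $u = a_1$, $v = a_2$ realizing the shedding diameter, i.e. $\tau(\ba) = \tau(G)$ (such a $\ba$ exists by the definition of $\tau(G)$ as a minimum over $\A_G$; if the minimizing sequence does not have the required form I would note that the height function only depends on the poset $\preceq_\ba$ and that Lemma~\ref{shedding} gives enough freedom, or simply carry $\tau(\ba)$ through in place of $\tau(G)$ and observe $\tau(\ba) \ge \tau(G)$ makes the bound only larger, hence it suffices to prove it for the minimizer). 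Embed $G$ on the $4n^3 \times 8n^5$ grid as in Theorem~\ref{plane}, so that $\ba$ becomes a convex shedding sequence for the embedded $G$.

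Next I would apply Lemma~\ref{height} to this embedding. It produces, for $i = 3, \ldots, n$, convex polyhedra $P_i$ projecting vertically onto $G_i$, with integer heights $h(a_i)$ satisfying $h(a_1) = h(a_2) = h(a_3) = 0$ and
\[
h(a_i) \;\le\; 499\, n^8 m_i + 1, \qquad m_i = \max\{ h(a_j) \mid a_j \rightarrow_\ba a_i \},
\]
for $i > 3$. The polyhedron $P_n$ then projects vertically onto $G = G_n$, so $G$ is the vertical projection of a convex $3$-polyhedron whose $x$- and $y$-coordinates lie in the $4n^3 \times 8n^5$ grid; it remains only to bound $\max_i h(a_i)$ by $(500\,n^8)^{\tau(G)}$.

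The heart of the argument is this last estimate, carried out by induction on the height $\tau(a_i, \ba)$. I claim $h(a_i) \le (500\,n^8)^{\tau(a_i,\ba) - 2} - 1$ for all $i$ (or a similar clean closed form; the exact exponent offset is a routine bookkeeping matter). For $i \le 3$ we have $h(a_i) = 0$ and $\tau(a_i,\ba) = i \le 3$, so the base case $i = 3$ reads $0 \le (500\,n^8)^1 - 1$, which holds. For $i > 3$, every $a_j$ with $a_j \rightarrow_\ba a_i$ has $\tau(a_j,\ba) \le \tau(a_i,\ba) - 1$, so by induction $m_i \le (500\,n^8)^{\tau(a_i,\ba) - 3} - 1$, and then
\[
h(a_i) \;\le\; 499\, n^8 m_i + 1 \;\le\; 499\, n^8 \bigl( (500\,n^8)^{\tau(a_i,\ba) - 3} - 1 \bigr) + 1 \;\le\; (500\,n^8)^{\tau(a_i,\ba) - 2} - 1,
\]
where the final inequality is the elementary fact $499 n^8 (B - 1) + 1 \le 500 n^8 B - 1$ for $B \ge 1$ (valid since $n^8 \ge 1$). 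Taking the maximum over all $i$ and using $\max_i \tau(a_i,\ba) = \tau(\ba) = \tau(G)$ gives
\[
\max_i h(a_i) \;\le\; (500\,n^8)^{\tau(G) - 2} - 1 \;<\; (500\,n^8)^{\tau(G)},
\]
so all vertices of $P_n$ have last coordinate a nonnegative integer strictly below $(500\,n^8)^{\tau(G)}$, completing the proof.

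The main obstacle is mostly a matter of care rather than depth: making sure the shedding sequence used to lift (the one realizing $\tau(G)$) can be taken of the form required by Theorem~\ref{plane}, and getting the exponent offsets in the induction to line up so that the stated bound $(500\,n^8)^{\tau(G)}$ comes out cleanly (the constant $500$ is chosen precisely to absorb the $+1$ in Lemma~\ref{height}'s recursion into the factor $499$, which is the only place any slack is needed). I would present the induction in terms of $\tau(a_i,\ba)$ directly, since $\rightarrow_\ba$ strictly decreases it, and only pass to $\tau(G)$ at the very end.
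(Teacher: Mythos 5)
Your proposal is correct and follows essentially the same route as the paper: embed $G$ via Theorem~\ref{plane} with a shedding sequence realizing $\tau(G)$, lift it using Lemma~\ref{height}, and resolve the height recursion by induction on $\tau(a_i,\ba)$. The paper runs the induction in the slightly cruder form $h(a_i) \le (499n^8+1)^{\tau(a_i,\ba)} \le (500n^8)^{\tau(a_i,\ba)}$, which sidesteps the small wrinkle that your closed form $(500\,n^8)^{\tau(a_i,\ba)-2}-1$ is negative for $i=1$ (where $\tau(a_1,\ba)=1$ and $h(a_1)=0$), but as you note this offset is routine bookkeeping.
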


\begin{proof}  Choose a shedding sequence $\ba \in \A_G$ such that $\tau(G) = \tau(\ba)$.  By Theorem \ref{plane}, we may embed $G$ in a $4n^3 \times 8n^5$
integer grid such that $\ba = (a_1, \ldots, a_n)$ is a convex shedding sequence for $G$.  For each vertex $a_i$ we may assign a height $h(a_i)$ as follows.  For
$i = 1, 2, 3$ we may set $z_i = 0$.  For $i > 3$, by Lemma \ref{height} we may choose $h(a_i)$ such that $G_i$ is the projection of a polyhedral surface, and
\[h(a_i) \leq \left(499n^8 + 1\right)^{\tau(a_i, \ba)} \leq (500n^8)^{\tau(a_i, \ba)} \leq (500n^8)^{\tau(\ba)} = (500n^8)^{\tau(G)}.\] \end{proof}

Note that if the boundary of $G$ is a triangle (that is, $\partial \F(G)$ contains exactly three vertices), then the polyhedron of Theorem~\ref{t:size} may be
replaced with a (bounded) $3$-polytope.  Indeed, simply truncate the polyhedron with the hyperplane that is defined by the lifts of the three boundary vertices
of $G$.  Then the three boundary vertices of $G$ lift to the vertices of a triangular face of the resulting $3$-polytope.

\bigskip

\section{Triangulations of a rectangular grid}\label{s:grid-tri}

\noindent
For $p, q \in \Z$, $p, q \geq 2$, let $[p \times q] = \{1, \ldots, p\} \times \{1, \ldots, q\}$.  We may think of the integer lattice $[p \times q]$ as the
vertices of $(p - 1)(q - 1)$ unit squares.  A geometric plane triangulation $G$ is a \emph{triangulation of $[p \times q]$} if the vertices of $G$ are exactly
the vertices of $[p \times q]$, and every boundary edge of $[p \times q]$ is an edge of $G$.  We call $G$ a \emph{grid triangulation}.  An $\ell \times \ell$
\emph{subgrid} of $\Z^2$ is an integer translation of the lattice $[\ell \times \ell] = \{1, \ldots, \ell\} \times \{1, \ldots, \ell\}$.  By an $\ell \times
\ell$ subgrid of $[p \times q]$ we mean an $\ell \times \ell$ subgrid of $\Z^2$ that is a subset of $[p \times q]$.  In this section we state and prove the
following result concerning the shedding diameter of grid triangulations.

\begin{thm}  Let $G$ be a triangulation of $[p \times q]$ such that for every edge $e$ of $G$, there is a subgrid of size $\ell \times \ell$ that contains the
endpoints of $e$.  Then $\tau(G) \leq 6\ell(p + q)$. \label{t:diam} \end{thm}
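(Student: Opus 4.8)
The plan is to reduce Theorem~\ref{t:diam} to a purely combinatorial statement about paths, and then to produce a shedding sequence that ``sweeps'' the rectangle in blocks of side $\approx \ell$. Recall from the discussion after the definition of $\tau$ that $\tau(\ba)$ is the height of the poset $\preceq_\ba$, and a chain in $\preceq_\ba$ is (a refinement of) a path $v_1,\dots,v_k$ in $G$ whose $\ba$-indices are strictly increasing; since each $G_i(\ba)$ is the induced subgraph of $G$ on $\{a_1,\dots,a_i\}$, one checks that $\tau(\ba)$ equals the largest number of vertices on such an \emph{$\ba$-increasing path}, up to the additive constant contributed by the base triangle $a_1a_2a_3$. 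Hence it suffices to exhibit \emph{one} shedding sequence $\ba$ for $G$ all of whose $\ba$-increasing paths have at most $6\ell(p+q)-O(1)$ vertices.

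For the construction, I would first record the combinatorial facts about shedding vertices: for a triangulated disk $H$, a boundary vertex that is not an endpoint of a diagonal of $H$ is a shedding vertex, and in the present setting (all edges of any sub-triangulation of $G$ confined to $\ell\times\ell$ boxes) this guarantees a plentiful supply of sheddable vertices near any chosen side of the current region. Using this, I would build $\ba$ by peeling $G$ from the top down so that the peel is organized by the blocks $B_{j_x,j_y}=\{(x,y):\lceil x/\ell\rceil=j_x,\ \lceil y/\ell\rceil=j_y\}$: process the $\lceil p/\ell\rceil\times\lceil q/\ell\rceil$ blocks in lexicographic order with the far block first, fix a local order inside each block, and at each step delete a shedding vertex of the current triangulation whose $(\text{block},\text{local})$ key is as large as possible. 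The key structural point to establish is that this peel is \emph{sweep-like}: deleting a vertex changes the sheddability only of its neighbors, which lie in an adjacent block, so the key of the next deleted vertex exceeds that of the previous one by at most $(1,1)$ in the two block coordinates; together with the fact that the block coordinates change by at most $1$ along any edge of $G$, this forces the block key to be monotone, up to bounded and locally absorbed error, along every $\ba$-increasing path.

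The counting then runs as follows. Let $v_1,\dots,v_k$ be an $\ba$-increasing path. The coordinate $\beta_x=\lceil x/\ell\rceil$ is non-decreasing along the path except for at most one backward step per change of the ``row'' coordinate $\beta_y$, and it moves by at most $1$ per step, so it takes $O(\lceil p/\ell\rceil)$ values; within each maximal $\beta_x$-run the same applies to $\beta_y$, giving $O(\lceil q/\ell\rceil)$ values; hence the path meets $O(\lceil p/\ell\rceil+\lceil q/\ell\rceil)$ distinct blocks. Inside a fixed block the path is an $\ba$-increasing path of a triangulation contained in an $\ell\times\ell$ square, which has at most $\ell^2$ vertices (one may do better, but $\ell^2$ is enough; here one uses that two adjacent vertices with equal $x$- or equal $y$-coordinate differ by exactly $1$ in the other coordinate, so no grid point is skipped). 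Multiplying the two bounds and clearing ceilings via $\ell\le\min(p,q)$ gives $k\le 6\ell(p+q)$ after a short arithmetic check, and the additive constant from the base triangle is absorbed; this yields $\tau(G)\le\tau(\ba)\le 6\ell(p+q)$.

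The main obstacle is the middle step: showing that the block-lexicographic peel is realizable as an honest shedding sequence and is genuinely sweep-like. Concretely, one must verify that at every stage the current triangulation has a shedding vertex whose block key is (nearly) maximal among surviving vertices — which requires the description of sheddable boundary vertices together with control over how the outer boundary of the shrinking triangulation evolves — and that the ``key jumps up by at most $(1,1)$ per peel step'' estimate holds. Once that is in place, the remainder is the routine grid-counting sketched above, and tightening the constant to the stated $6$ is bookkeeping made possible by the slack $\ell\le\min(p,q)$.
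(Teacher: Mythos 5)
Your reduction of $\tau(\ba)$ to the longest $\ba$-increasing path is fine, but the heart of your argument --- that a block-lexicographic greedy peel exists as an honest shedding sequence and is ``sweep-like'' --- is exactly the step you have not supplied, and it is where the real difficulty of Theorem~\ref{t:diam} lives. Two specific problems. First, your locality claim is not correct as stated: deleting a vertex $u$ can change the sheddability of vertices at distance two from $u$, because a neighbor $w$ of $u$ may become a boundary vertex, turning an edge $vw$ into a diagonal and thereby destroying the sheddability of $v$; more importantly, when the $\leq_{\Z^2}$-greatest vertex of the current block is \emph{not} sheddable, Lemma~\ref{l:diag} only guarantees a shedding vertex somewhere in a component cut off by a diagonal, and without further argument that component (hence the vertex you are forced to shed) need not lie in, or even near, the block you are currently processing. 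Controlling this is precisely what the paper's construction is engineered for: it sheds every \emph{fourth} column simultaneously, works with tricolumns, and maintains invariants (untouched columns of the form $U(3+4j)$, connectivity of the bottom row, presence of the entire bottom edge) to prove that the cut-off component, and hence the forced shedding vertex, stays inside the current tricolumn. Your ``plentiful supply of sheddable vertices near any chosen side'' asserts this control rather than proving it, and you acknowledge as much in your final paragraph.

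Second, even granting a nearly block-contiguous peel, your block-counting step needs more than you establish: the claim that an increasing path meets only $O(\lceil p/\ell\rceil+\lceil q/\ell\rceil)$ blocks requires both block coordinates to be essentially monotone along the path \emph{and} the geometric adjacency constraint to prevent re-sweeping each row; with only ``monotone up to bounded and locally absorbed error'' the visited-block count can degrade toward $\lceil p/\ell\rceil\cdot\lceil q/\ell\rceil$, which multiplied by $\ell^2$ gives the trivial bound $pq$, and in any case the constant $6$ is not obtained by the loose arithmetic you indicate. Note that the paper avoids this multiplicative structure altogether: it never bounds (blocks visited) $\times$ (vertices per block), but instead partitions $V(G)$ into at most $2p\ell+3q\ell+3q\ell$ explicit antichains of $\preceq_\ba$ (one antichain per simultaneous-shedding step across the well-separated tricolumns, plus singletons at the end), and uses the fact that a chain meets each antichain at most once. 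So your route is genuinely different in both of its halves, but as written it has a gap at its central step, and the counting half would also need the stronger monotonicity statement to be proved before the arithmetic can be checked.
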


This gives a class of triangulations with sublinear shedding diameter, if $\ell$ is held constant.  According to Theorem~\ref{t:size}, such a triangulation can
be drawn in the plane so that it is the vertical projection of a simplicial $3$-polyhedron embedded in a subexponential grid.  That is, this class of
triangulations corresponds to a class of simplicial polyhedra which may be embedded in an integer grid whose size is subexponential in the number of vertices.

Let $\leq_{\Z^2}$ denote the linear order on $\Z^2$ defined by
\[\text{$(x_1, y_1) \leq_{\Z^2} (x_2, y_2)$ if and only if $y_1 < y_2$ or $y_1 = y_2$ and $x_1 \leq x_2$.}\]
That is, $\leq_{\Z^2}$ is a lexicographic order in which $y$-coordinates take precedence in determining the order.  We state without proof the following lemma,
which summarizes some standard properties of shedding vertices of planar triangulations (see~\cite[$\S3$]{BP} for a proof and references).

\begin{lemma} [\cite{BP}]  Let $G$ be a plane triangulation, and let $v$ be a boundary vertex of $G$.  Then either~$v$ is a shedding vertex of $G$, or is the endpoint of a diagonal $e$ of~$G$.  Furthermore, each of the two connected components of $\F(G) \sm e$ contains a shedding vertex of $G$.
\label{l:diag} \end{lemma}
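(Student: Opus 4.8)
The plan is to reduce the lemma to a single local fact about the link of a boundary vertex, and then handle the ``furthermore'' clause by a minimality argument on chords. Throughout, assume $G$ has $n\ge 4$ vertices. I will freely use the standard structure of a triangulated disk: $\F(G)$ is a closed disk with boundary a simple cycle $C=\partial\F(G)$, and the \emph{link} of a boundary vertex $v$ --- the list $w_0,w_1,\dots,w_k$ of neighbors of $v$ in cyclic order, with $w_0,w_k$ its two neighbors along $C$ --- is a simple path, the faces at $v$ being exactly the triangles $vw_0w_1,\dots,vw_{k-1}w_k$.

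First I would establish the following \textbf{local characterization}: for a boundary vertex $v$, the conditions (a) ``$v$ is a shedding vertex of $G$'', (b) ``none of $w_1,\dots,w_{k-1}$ is a boundary vertex of $G$'', and (c) ``$v$ is not an endpoint of a diagonal of $G$'' are all equivalent. Here (b)$\Leftrightarrow$(c) is immediate: $vw_0$ and $vw_k$ are boundary edges, while $vw_j$ for $1\le j\le k-1$ is an interior edge, so $v$ lies on a diagonal exactly when some such $w_j$ is a boundary vertex. For (a)$\Leftrightarrow$(b): deleting $v$ removes the fan of triangles at $v$ and splices the subarc $w_0,v,w_k$ of $C$ into the path $w_0,w_1,\dots,w_k$; every remaining bounded face is still a triangle, and since (under (b)) every link edge $w_jw_{j+1}$ is an interior edge, the new boundary walk is a simple cycle and $\F(G-\{v\})$ is again a triangulated disk, so $G-\{v\}$ is a plane triangulation; conversely, if some $w_j$ ($1\le j\le k-1$) is a boundary vertex, the new boundary walk revisits $w_j$, so $G-\{v\}$ is not $2$-connected. (When $k=1$ one notes separately that $w_0w_1$ is an interior edge, as otherwise $G$ would be a single triangle, so deletion simply clips an ear.) In particular (a)$\Leftrightarrow$(c) is precisely the dichotomy in the first sentence of the lemma, and shows the two alternatives are mutually exclusive.

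For the \textbf{furthermore} clause, let $e=vw$ be a diagonal and $D$ one component of $\F(G)\setminus e$; its closure $\overline D$ is a disk bounded by $e$ together with one of the two arcs $P=(v=p_0,p_1,\dots,p_r=w)$ of $C$, and $r\ge 2$ since $v,w$ are non-adjacent on $C$. Call an edge $\varepsilon$ of $G$ lying in $\overline D$ a \emph{cut} if both endpoints lie in $\{p_0,\dots,p_r\}$ and $\varepsilon$ is not an arc edge $p_ip_{i+1}$; then $\varepsilon=p_ap_b$ with $b-a\ge 2$, and $\varepsilon$ together with the subarc $p_a,\dots,p_b$ bounds a sub-disk $R(\varepsilon)\subseteq\overline D$ which is a union of faces of $G$. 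Since $e=p_0p_r$ is itself a cut, I may pick a cut $\varepsilon^\ast=p_ap_b$ with $b-a$ minimal, and I claim that $z:=p_{a+1}$ is a shedding vertex of $G$ lying in $D$. It lies in $D$ because $1\le a+1\le r-1$ (as $a\le b-2\le r-2$), so $z$ is one of the boundary vertices of $G$ in $D$. The only boundary edges of $R(\varepsilon^\ast)$ at $z$ are $p_ap_{a+1}$ and $p_{a+1}p_{a+2}$, which are also two consecutive edges of $C$; hence the fan of $G$ at $z$ fills the interior angle of $R(\varepsilon^\ast)$ at $z$, so every neighbor of $z$ in $G$ lies in $\overline{R(\varepsilon^\ast)}$. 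But the only boundary vertices of $G$ inside $\overline{R(\varepsilon^\ast)}$ are $p_a,\dots,p_b$; so if $z$ had a boundary-vertex neighbor other than $p_a$ or $p_{a+2}$, it would be some $p_c$ with $c\ge a+3$, and then $p_{a+1}p_c$ would be a cut with $c-(a+1)\le b-a-1<b-a$, contradicting minimality. Thus the only boundary-vertex neighbors of $z$ are $p_a,p_{a+2}$, which are precisely its two neighbors along $C$, so by condition (c)$\Rightarrow$(a) of the local characterization, $z$ is a shedding vertex of $G$. Applying this to each component of $\F(G)\setminus e$ finishes the proof.

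The part I expect to be genuinely delicate is this second half. The naive approach --- restrict $G$ to the disk $\overline D$, find a shedding vertex of that smaller triangulation, and carry it back to $G$ --- does not work, since a vertex can be removable inside a sub-triangulation yet still lie on a diagonal of $G$. The point of choosing a \emph{minimal} cut is exactly to certify, directly within $G$, that no diagonal of $G$ touches $z$. The remaining work --- the planar bookkeeping in the local characterization (that removing $v$ really produces a triangulated disk, ear case included) and the observation that the fan at $z$ is trapped in $\overline{R(\varepsilon^\ast)}$ --- is routine once the claims are stated carefully.
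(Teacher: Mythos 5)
Your proposal cannot diverge from ``the paper's own proof'' because there is none: Lemma~\ref{l:diag} is stated without proof, with a pointer to~\cite{BP} for the argument, so your write-up has to stand on its own --- and it does. The local characterization (for a boundary vertex $v$, shedding $\Leftrightarrow$ no interior link vertex of $v$ lies on $\partial\F(G)$ $\Leftrightarrow$ $v$ meets no diagonal) is the standard dichotomy and your cut-vertex argument for the converse direction is sound; the treatment of the ``furthermore'' clause via a chord $p_ap_b$ of minimal span inside the closed component $\overline D$ is essentially the classical minimal-chord/ear argument by which Billera and Provan establish vertex decomposability of triangulated $2$-balls. You are right that this is the delicate half: restricting to $\overline D$ and finding a shedding vertex of the sub-triangulation is not enough, and the minimal cut is precisely what certifies inside $G$ that $z=p_{a+1}$ meets no diagonal of $G$. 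The two points that need (and get) care are: (i) that every face of $G$ at $z$ lies in $R(\varepsilon^\ast)$, which holds because $\partial R(\varepsilon^\ast)$ meets $z$ only in the two edges $zp_a$, $zp_{a+2}$ of $\partial\F(G)$, each incident to a unique face, so the fan at $z$ cannot exit $R(\varepsilon^\ast)$; and (ii) the standing hypothesis $n\ge 4$, which is genuinely required (for a single triangle the statement fails as literally written) and is harmless here since the paper only invokes shedding vertices for $G_i$ with $i\ge 4$. In short: correct, complete modulo routine planar bookkeeping you explicitly flag, and in line with the cited source's method.
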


The rough idea of the proof of Theorem~\ref{t:diam} is as follows (we provide the details below).  We begin by constructing a particular shedding sequence $\ba$
for $G$.  To do this, we first subdivide $[p \times q]$ into a grid of $\lceil \frac{pq}{\ell^2} \rceil$ subgrids, (most of) which are squares of size $\ell
\times \ell$.  These squares form $\lceil \frac{p}{\ell} \rceil$ columns and $\lceil \frac{q}{\ell} \rceil$ rows.

We shed $G$ in three stages.  In Stage 1, we take every fourth column $U(1), U(5), U(9), \ldots$ and shed the vertices of each of these columns from top to
bottom.  When shedding $U(i)$, we may need to shed vertices in the column $U(i - 1)$ or $U(i + 1)$, for a total of at most $3q\ell$ vertices shed in the process
of shedding the column $U(i)$.  Because of their spacing, the shedding vertices in each column do not interact.  Specifically, at each step we have a collection
of shedding vertices, one from each column, which we may think of as shedding ``all at once".  This collection of vertices is then an antichain with respect to
$\preceq_\ba$.  When shedding the vertices of each such column, for topological reasons we do not shed the vertices $(x, y)$ with $y \leq \ell$.  See
Figure~\ref{f:gridshed}.

After Stage 1 is complete, what remains are a set of ``jagged tricolumns", each of which consists of the remaining vertices of three adjacent columns.  Hence each
jagged tricolumn contains at most $3q\ell$ vertices.  In Stage 2, we shed these columns, but for topological reasons we do not shed vertices $(x, y)$ with $y \leq 2
\ell$.  As before, these jagged tricolumns do not interact, and at each step we have a set of shedding vertices, each of which belongs to a different jagged tricolumn.
Hence this set forms an antichain.  Finally, in Stage 3 we shed the remaining vertices, which are contained in the bottom two rows of $G$.  There are at most
$2p\ell$ such vertices, and we simply define a singleton antichain for each of them.  Therefore we see that $G$ may be partitioned into at most $2p\ell + 3q\ell
+ 3q\ell = \ell(2p + 6q) \leq 6\ell(p + q)$ antichains of~$\preceq_\ba$.  This implies that $\tau(\ba) \leq 6\ell(p + q)$, since $\tau(\ba)$ is the length of
some chain in $\preceq_\ba$.  The detailed proof follows.

\begin{figure}[t!]
 \begin{center}
     \begin{overpic}[height=5.5cm]{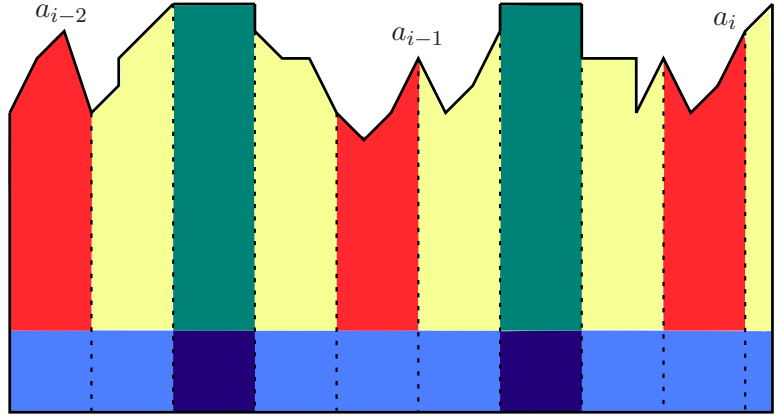}
     \put(3.5, 52){$a_{i - 2}$}
     \put(50, 49){$a_{i - 1}$}
     \put(92, 51){$a_i$}
      \end{overpic}
   \caption{A graph $G_i$ produced during Stage 1 of the construction of $\ba$ in the proof of Theorem~\ref{t:diam}.  Distinct columns $U(j)$ are separated by
   dashed lines.  The columns of the form $U(1 + 4j)$ are shown in red, while the columns $U(3 + 4j)$ are shown in green.  The bottom row $R(1)$ is shown in
   blue.}
   \label{f:gridshed}
 \end{center}
\end{figure}

\begin{proof}[Proof of Theorem~\ref{t:diam}]  Let $G$ be such a triangulation of $[p \times q]$.  For $i \in \Z$, let
\begin{align*}
U(i) & = \{(x, y) \in  [p \times q] \ | \ \ell(i - 1) + 1 \leq x \leq \ell i\}, \quad \text{and} \\
R(i) & = \{(x, y) \in  [p \times q] \ | \ \ell(i - 1) + 1 \leq y \leq \ell i\}.
\end{align*}
Many of these sets are empty (for example when $i \leq 0$).  We think of the sets $U(i)$ as \emph{columns} of width $\ell$ and the sets $R(i)$ as \emph{rows} of height $\ell$.  For each
$i \in \Z$, we also define
\[T(i) = U(i - 1) \cup U(i) \cup U(i + 1),\]
which may be empty.  We call $T(i)$ a \emph{tricolumn} of $[p \times q]$.

We construct the shedding sequence $\ba$ recursively.  Suppose that we have a sequence of shedding vertices $a_{i + 1}, a_{i + 2}, \ldots, a_n$ (for the initial
step of the recursion, $i = n$ and this sequence is empty), and therefore we also have plane triangulations $G_i, G_{i + 1}, \ldots, G_n = G$, where as usual $G_{j - 1} = G_j
- \{a_j\}$ for all $j = i + 1, \ldots, n$.  For each $i = 1, \ldots, n$, let $R_i(1)$ denote the subgraph of $G_i$ induced by the vertices in $R(1)$.  Similarly, for each $i = 1, \ldots, n$ and $j \in \Z$, let $U_i(j)$ denote the subgraph of $G_i$ induced by the vertices in $U(j)$.
We let $\cP(i, 1)$, $\cP(i, 2)$, and $\cP(i, 3)$ denote the following statements:
\begin{align*}
\cP(i, 1). & \quad \text{$U_i(3 + 4j) = U_n(3 + 4j)$ (for all $j \in \Z$ such that $U(3 + 4j) \neq \emptyset$).} \\
\cP(i, 2). & \quad \text{$R_i(1)$ is connected.} \\
\cP(i, 3). & \quad \text{$\{1, 2, \ldots, p\} \times \{1\} \subset V(G_i)$.}
\end{align*}
Note that $\cP(n, 1)$ holds trivially.  Furthermore, we have $G_n = G$, so the vertices of $\nobreak{U_n(1 + 3j)}$ are exactly those of $U(1 + 3j)$, and
similarly for $R_n(1)$ and $R(1)$.  Since $G$ is a grid triangulation, it follows that $U_n(1 + 3j)$ and $R_n(1)$ are connected.  In particular, $\cP(n, 2)$
holds.  Finally, $\cP(i, 3)$ says that $G_i$ contains all vertices $v$ of $[p \times q]$ for which $y(v) = 1$.  Clearly $\cP(n, 3)$ holds.

To construct the next vertex $a_i$ of the shedding sequence, we break the construction into three stages, described below.  As can readily be seen, each stage occurs for a consecutive sequence of indices.  That is, there are integers $i_2 < i_1$ such that Stage~1 occurs
for $\nobreak{i = i_1, i_1 + 1, \ldots, n}$, Stage 2 occurs for $\nobreak{i = i_2, i_2 + 1, \ldots, i_1 - 1}$, and
Stage~3 occurs for $\nobreak{i = 1, 2, \ldots, \nobreak{i_2 - 1}}$.  We will also show, as we describe these stages, that $\cP(i, 1)$ and $\cP(i, 3)$ hold for $i = i_1, i_1 + 1, \ldots, n$, and $\cP(i, 2)$ holds for $i = i_2, i_2 + 1, \ldots, \nobreak{i_1 - 1}$.  That is, $\cP(i, 1)$ and $\cP(i, 3)$ hold through all of Stage 1, and $\cP(i, 2)$ holds through all of Stage~2.  We mention that $\cP(i, 3)$ holds through all of Stage 2 as well, but we will not need this fact.

\medskip

\noindent \textbf{Stage 1.}  Some column of the form $U(1 + 4j)$ contains a vertex $(x, y)$ of $G_i$ with $y > \ell$.  See Figure~\ref{f:gridshed}.  Let $U(1 +
4j_1), \ldots, U(1 + 4j_r)$ denote all such columns, where $j_1 < \cdots < j_r$.  Assume that $\cP(i, 1)$ and $\cP(i, 3)$ hold.

For each $k = 1, \ldots, r$, let $v_k$ be the $\leq_{\Z^2}$-greatest vertex of $U(1 + 4j_k)$.  If $v_k$ is a shedding vertex of $G_i$, define $w_k = v_k$.
Otherwise, by Lemma~\ref{l:diag}, the vertex $v_k$ is the endpoint of a diagonal of $G_i$.  Let $u_k$ denote $\leq_{\Z^2}$-greatest vertex of $G_i$ such that the
edge $u_kv_k$ is a diagonal of $G_i$.  Write $e_k = u_kv_k$.

By the Jordan curve theorem, $\F(G_i) \sm e_k$ has two connected components, call them $A_k$ and $A_k'$.  Since the vertices $u_k$ and $v_k$ are
adjacent, by assumption they are contained in an $\ell \times \ell$ subgrid of $[p \times q]$.  It follows that $u_k \in T(1 + 4j_k)$.  Thus $u_k, v_k \notin U(3
+ 4j)$ for all~$j$.  Furthermore, since $y(v_k) > \ell$ we have $y(u_k) > 1$.  Then by $\cP(i, 1)$ and $\cP(i, 3)$, one of the components of $\F(G_i) \sm e_k$, say $A_k$, does not intersect any of the columns $U(3 + 4j)$, and does not contain any vertices $v$ with $y(v) = 1$.  For otherwise, we could connect the components of $\F(G_i) \sm e_k$ with a path from a vertex in $\{1, 2, \ldots, p\} \times \{1\}$ to a vertex in a column $U(3 + 4j)$.  It follows that all vertices in $A_k$ are contained in $T(1 + 4j_k)$.  By Lemma~\ref{l:diag}, the region $A_k$
contains a shedding vertex of $G_i$.  We define $w_k$ to be the $\leq_{\Z^2}$-greatest such shedding vertex.

We now have a collection of shedding vertices $w_1, \ldots, w_r$ of $G_i$.  Clearly the neighbors of each vertex $w_k$ lie in the tricolumn $T(1 + 4j_k)$, so no two of the vertices $w_1, \ldots w_k$ are adjacent to a common vertex.  Thus the vertex $w_{r - 1}$ is a shedding vertex of $G_i - \{w_r\}$, the vertex $w_{r - 2}$ is a shedding vertex of
$G_i - \{w_r, w_{r - 1}\}$, etc.  That is, these vertices remain shedding vertices after deleting any finite subset of them from $G_i$.  So for each $k = 1,
\ldots, r$, we may define $a_{i - r + 1}, \ldots, a_i$ by $a_{i - r + k} = w_k$.  Since no two of the vertices $a_{i - r + 1}, \ldots a_i$, are adjacent, the set
$\{a_{i - r + 1}, \ldots, a_i\}$ is an antichain of $\preceq_\ba$.

We will write $i(k) = i - r + k$.  We now show inductively that $\cP(i(k) - 1, 1)$ and $\cP(i(k) - 1, 3)$ hold, for all $k = 1, \ldots, r$.  From the above definition of the shedding vertices $a_{i(k)}$, we have $a_{i(k)} \notin U(3 + 4j)$ for all $j$.  Thus $U_{i(k) - 1}(3 + 4j) = U_i(3 + 4j) = U_n(3 + 4j)$, where the last equality follows from $\cP(i, 1)$.  That is, $\cP(i(k) - 1, 1)$ holds for all $k = 1, \ldots, r$.  Similarly, we see that $y(a_{i(k)}) > 1$, so the vertices of $G_{i(k) - 1}$ with $y$-coordinate $1$ are the same as those of $G_i$ with $y$-coordinate $1$.  It follows from $\cP(i, 3)$ that $\cP(i(k) - 1, 3)$ holds for all $k = 1, \ldots, r$.

This completes the description of Stage 1.  Before describing Stage 2, we show that $\cP(i_1 - 1, 2)$ holds, where $i_1 \in \{1, 2, \ldots, n\}$ is the least index for which Stage 1 occurs.  That is, we wish to show that at the beginning of Stage 2 (when $i = i_1 - 1$), the graph $R_{i_1 - 1}(1)$ is connected.  To this end, we introduce the following notation.  Let $R_i(1, j)$ denote the subgraph of $G_i$ induced by the vertices $T(1 + 4 j) \cap R(1)$, for $i = 1, \ldots, n$ and $j \in \Z$.

Note that in Stage 1, since $\cP(i, 3)$ holds for all $i = i_1, \ldots, n$, the induced subgraph of $G_i$ on the vertices $\{1, \ldots, p\} \times \{1\}$ is a (connected) path in $R_i(1)$, which clearly intersects every column $U(j)$ of $G_i$.  Therefore, because $\cP(i, 1)$ also holds in Stage 1, to show that $R_{i_1 - 1}(1)$ is connected, we only need to show that for each fixed $j$, the graph $R_{i_1 - 1}(1, j)$ is connected.

Let $i \in \{i_1, i_1 + 1, \ldots, n\}$, and fix a column $U(1 + 4 j_k)$.  Note that $\cP(n, 2)$ holds, so in particular $R_n(1, j_k)$ is connected.  We assume that $R_{i(k)}(1, j_k)$ is connected, and we will show that there is an index $i_1 \leq i' \leq i(k)$ such that $R_{i' - 1}(1, j_k)$ is connected.  If $a_{i(k)} \notin R(1)$, then $R_{i(k) - 1}(1, j_k) = R_{i(k)}(1, j_k)$.  Therefore we may simply take $i' = i(k)$ in this case.

Now suppose that $a_{i(k)} \in R(1)$.  Note that $y(v_k) > \ell$, and therefore $v_k \notin R(1)$.  So $w_k = a_{i(k)} \neq v_k$, and thus $w_k \in A_k$, where $A_k$ is the component of $\F(G_i) \sm e_k$ defined above.  Consider a path $\gamma$ in $R_{i(k)}(1, j_k)$ whose endpoints are not in $A_k$, and which passes through $a_i$.  Then $\gamma$ must both enter and exit the component $A_k$ through the vertex $u_k$ of $e_k$, because $v_k \notin R(1)$.  Thus $\gamma$ can be replaced with a path $\gamma'$ in $R_{i(k)}(1, j_k)$ having the same endpoints, such that $\gamma'$ contains no vertices of $A_k$.  Then from the assumption that $R_{i(k)}(1, j_k)$ is connected, we conclude that
\begin{equation}
\text{$R_{i(k)}(1, j_k) \sm A_k$ is connected.}
\label{e:stage1}
\end{equation}

From the above construction of the vertices $w_k$, we see that in later steps $s$ of Stage~1, we will always define the shedding vertex $w$ for tricolumn $T(1 + 4 j_k)$ to be a vertex such that $w \in A_k \cap G_{s}$, until the set $A_k \cap G_s$ is empty.  So let $i'$ denote the step at which $a_{i'}$ is the last remaining vertex of $A_k \cap G_{i'}$.  Then $R_{i' - 1}(1, j_k) = R_{i(k)}(1, j_k) \sm A_k$, and therefore~(\ref{e:stage1}) implies that $R_{i' - 1}(1, j_k)$ is connected.

It follows that $R_{i_1 - 1}(1)$ is connected.  That is, $\cP(i_1 - 1, 2)$ holds.

\medskip

\noindent \textbf{Stage 2.}  No column of the form $U(1 + 4j)$ contains vertices $(x, y)$ of $G_i$ with $y > \ell$, but some tricolumn of the form $T(3 + 4j)$
contains vertices $(x, y)$ of $G_i$ with $y > 2 \ell$.  Let $T(3 + 4j_1), \ldots, T(3 + 4j_r)$ denote all such tricolumns, where $j_1 < \cdots < j_r$.  Assume that
$\cP(i, 2)$ holds.

For each $k = 1, \ldots, r$, let $v_k$ be the $\leq_{\Z^2}$-greatest vertex of $T(3 + 4j_k)$.  If $v_k$ is a shedding vertex of $G_i$, define $w_k = v_k$.
Otherwise, by Lemma~\ref{l:diag}, the vertex $v_k$ is the endpoint of a diagonal of $G_i$.  Let $u_k$ denote the $\leq_{\Z^2}$-greatest vertex of $G_i$ such that the
edge $u_kv_k$ is a diagonal of $G_i$.  Write $e_k = u_kv_k$.

By the Jordan curve theorem, $\F(G_i) \sm e_k$ has two connected components, call them $A_k$ and $A_k'$.  Since the vertices $u_k$ and $v_k$ are
adjacent, by assumption they are contained in an $\ell \times \ell$ subgrid of $[p \times q]$.  Since $y(v_k) > 2\ell$, it follows that $y(u_k) > \ell$, and thus $u_k, v_k \notin R(1)$.  Then by $\cP(i, 2)$, one of the components of $\F(G_i) \sm e_k$, say $A_k$, does not intersect $R(1)$.  By definition of Stage 2, we have 
\begin{equation}
V(U_i(1 + 4j)) \subseteq R(1), \quad j \in \Z,
\label{e:stage2}
\end{equation} 
so we also conclude that $A_k$ does not intersect any column of the form $U(1 + 4j)$.  By Lemma~\ref{l:diag}, the
region $A_k$ contains a shedding vertex of $G_i$.  We define $w_k$ to be the $\leq_{\Z^2}$-greatest such shedding vertex.  Note that $w_k \in T(3 + 4j_k)$ in this case as well, for otherwise, either $A_k$ contains a vertex in $U(1 + 4j_k)$ or $U(5 + 4j_k)$, or $G_i$ has an edge $uv$ with $|x(u) - x(v)| > \ell$.

We now have a collection of shedding vertices $w_1, \ldots, w_r$ of $G_i$.  Every vertex $w_k$ lies in the tricolumn $T(3 + 4j_k)$, and none of the neighbors of $w_k$ are contained in $R(1)$.  This implies, by~(\ref{e:stage2}), that no two of the vertices $w_1, \ldots, w_r$ are adjacent to a common vertex.  Thus these
vertices remain shedding vertices after deleting any finite subset of them from~$G_i$.  So for each $k = 1, \ldots, r$, we may define $a_{i - r + 1}, \ldots,
a_i$ by $a_{i - r + k} = w_k$.  Since no two of the vertices $a_{i - r + 1}, \ldots a_i$, are adjacent, the set $\{a_{i - r + 1}, \ldots, a_i\}$ is an antichain
of $\preceq_\ba$.

Finally, note that by construction we have $a_{i(k)} \notin R(1)$ for all $k = 1, \ldots, r$.  That is, none of the vertices of the row $R(1)$
are deleted in Stage 2.  Thus $R_{i(k) - 1}(1) = R_i(1)$ for all $k = 1, \ldots, r$, so from $\cP(i, 2)$ we conclude that $\cP(i(k) - 1, 2)$ holds for all $k = 1, \ldots, r$.

\medskip

\noindent \textbf{Stage 3.}  All vertices $(x, y)$ of $G_i$ have $y \leq 2\ell$.  If $i > 3$ we define $a_i$ to be the $\leq_{\Z^2}$-greatest shedding vertex of
$G_i$, which exists by Lemma~\ref{shedding}.  If $i \leq 3$ we define $a_i$ to be the $\leq_{\Z^2}$-greatest vertex of~$G_i$.  Clearly, the singleton set
$\{a_i\}$ is an antichain of~$\preceq_\ba$.

\begin{figure}[t!]
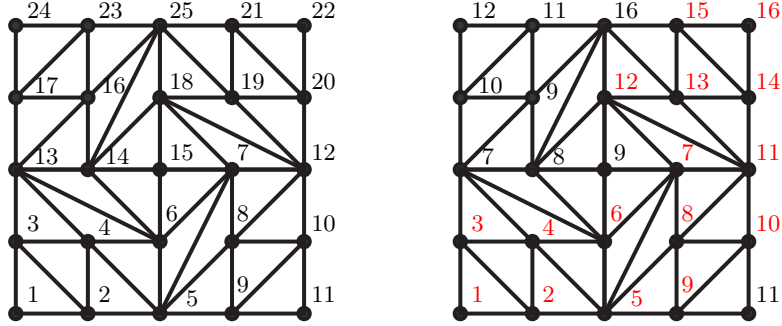

 \begin{center}
 \footnotesize
     \begin{overpic}[height=4.5cm]{grid1}
     \put(11, 95){\textcolor{black}{$24$}}
     \put(32, 95){\textcolor{black}{$23$}}
     \put(53, 95){\textcolor{black}{$25$}}
     \put(74, 95){\textcolor{black}{$21$}}
     \put(95, 95){\textcolor{black}{$22$}}

     \put(13, 73){\textcolor{black}{$17$}}
     \put(33, 73){\textcolor{black}{$16$}}
     \put(53, 74){\textcolor{black}{$18$}}
     \put(74, 74){\textcolor{black}{$19$}}
     \put(95, 74){\textcolor{black}{$20$}}

     \put(13, 52){\textcolor{black}{$13$}}
     \put(34, 52){\textcolor{black}{$14$}}
     \put(53, 53){\textcolor{black}{$15$}}
     \put(73, 53){\textcolor{black}{$7$}}
     \put(95, 53){\textcolor{black}{$12$}}

     \put(11, 32){\textcolor{black}{$3$}}
     \put(32, 31){\textcolor{black}{$4$}}
     \put(52, 35){\textcolor{black}{$6$}}
     \put(73, 35){\textcolor{black}{$8$}}
     \put(95, 32){\textcolor{black}{$10$}}

     \put(11, 11){\textcolor{black}{$1$}}
     \put(32, 11){\textcolor{black}{$2$}}
     \put(58, 10){\textcolor{black}{$5$}}
     \put(73, 14){\textcolor{black}{$9$}}
     \put(95, 11){\textcolor{black}{$11$}}
     \end{overpic}
      \quad \quad \quad \quad
      \begin{overpic}[height=4.5cm]{grid1}
      \put(11, 95){\textcolor{black}{$12$}}
     \put(32, 95){\textcolor{black}{$11$}}
     \put(53, 95){\textcolor{black}{$16$}}
     \put(74, 95){\textcolor{red}{$15$}}
     \put(95, 95){\textcolor{red}{$16$}}

     \put(13, 73){\textcolor{black}{$10$}}
     \put(33, 71){\textcolor{black}{$9$}}
     \put(53, 74){\textcolor{red}{$12$}}
     \put(74, 74){\textcolor{red}{$13$}}
     \put(95, 74){\textcolor{red}{$14$}}

     \put(14, 52){\textcolor{black}{$7$}}
     \put(35, 52){\textcolor{black}{$8$}}
     \put(53, 53){\textcolor{black}{$9$}}
     \put(73, 53){\textcolor{red}{$7$}}
     \put(95, 53){\textcolor{red}{$11$}}

     \put(11, 32){\textcolor{red}{$3$}}
     \put(32, 31){\textcolor{red}{$4$}}
     \put(52, 35){\textcolor{red}{$6$}}
     \put(73, 35){\textcolor{red}{$8$}}
     \put(95, 32){\textcolor{red}{$10$}}

     \put(11, 11){\textcolor{red}{$1$}}
     \put(32, 11){\textcolor{red}{$2$}}
     \put(58, 10){\textcolor{red}{$5$}}
     \put(73, 14){\textcolor{red}{$9$}}
     \put(95, 11){\textcolor{black}{$11$}}
      \end{overpic}
   \caption{The grid triangulation of Figure~\ref{f:santos}, together with the indices $i$ of the shedding sequence $\ba$ defined in the proof of Theorem~\ref{t:diam} (left)
   and the corresponding values of $\tau(a_i)$ (right).  A chain of maximal length $\tau(\ba) = 16$ is shown in red.}
   \label{f:shedding}
 \end{center}
\end{figure}

\medskip

This completes the construction of the shedding sequence $\ba = (a_1, \ldots, a_n)$ (See Figure~\ref{f:shedding}).  It is straightforward to count the number of
antichains of $\preceq_\ba$ obtained from this construction.  Stage 1 requires as many steps as it takes for the last column of the form $U(1 + 4j)$ to run out
of vertices $(x, y)$ with $y > \ell$.  Since each vertex $a_i$ of Stage 1 is contained in some tricolumn of the form $T(1 + 4j)$, this requires at
most $|T(1 + 4j)| = 3q\ell$ steps, each of which produces an antichain.  Similarly, Stage 2 requires as many steps as it takes for the last tricolumn of the form
$T(3 + 4j)$ to run out of vertices $(x, y)$ with $y > 2\ell$.  This requires at most $|T(3 + 4j)| = 3q\ell$ steps, each of which produces an antichain.  Finally,
each set $\{a_i\}$ is trivially an antichain, so taking the singleton of each vertex $a_i$ defined in Stage 3 yields at most $2p\ell$ antichains.

The set of antichains of $\preceq_\ba$ produced by these three cases clearly forms a partition of $\nobreak{V(G) = [p \times q]}$.  There are at most $2p\ell +
3q\ell + 3q\ell = \ell(2p + 6q)$ antichains in this partition.  Thus, since $\tau(\ba)$ is the length of some chain in $\preceq_\ba$, we have \[\tau(G) \leq
\tau(\ba) \leq \ell(2p + 6q) \leq 6\ell(p + q).\] \end{proof}

Theorems~\ref{t:size} and~\ref{t:diam} now immediately imply the following general result.

\begin{thm}
Let $G$ be a grid triangulation of $[p \times  q]$ such that every triangle
fits in an $\ell \times \ell$ subgrid.  Then $G$ can be realized as the graph
of a convex polyhedron embedded in an integer grid of size \ts
$4(pq)^3 \times 8(pq)^5 \times (500(pq)^8)^{6\ell(p+q)}$.
\label{t:grid}
\end{thm}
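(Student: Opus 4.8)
The plan is to obtain the statement as a direct consequence of Theorem~\ref{t:size} and Theorem~\ref{t:diam}; the substantive work has already been carried out in those two results, so what remains is bookkeeping. First I would record that a grid triangulation $G$ of $[p\times q]$ has exactly $n=pq$ vertices, since by definition its vertex set is all of $[p\times q]$. Next I would verify the hypothesis of Theorem~\ref{t:diam}. Every edge $e$ of a plane triangulation is an edge of some bounded (triangular) face $F$ of $G$ (the unique one if $e$ is a boundary edge, or either of the two if $e$ is interior). By assumption the three vertices of $F$ lie in a common $\ell\times\ell$ subgrid of $[p\times q]$, hence so do the two endpoints of $e$. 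Thus for every edge $e$ of $G$ there is an $\ell\times\ell$ subgrid containing the endpoints of $e$, so Theorem~\ref{t:diam} applies and gives $\tau(G)\le 6\ell(p+q)$.

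With this bound in hand I would apply Theorem~\ref{t:size} to $G$ with $n=pq$: it yields a geometric embedding of $G$ in a $4n^3\times 8n^5$ integer grid and a convex $3$-polyhedron $P\subset\R^3$ that projects vertically onto this embedding, with all vertices of $P$ lying in a $4n^3\times 8n^5\times (500n^8)^{\tau(G)}$ integer grid. Substituting $n=pq$ replaces the first two factors by $4(pq)^3$ and $8(pq)^5$. Since $500(pq)^8\ge 1$, the function $t\mapsto(500(pq)^8)^t$ is non-decreasing, so the bound $\tau(G)\le 6\ell(p+q)$ replaces the third factor by at most $(500(pq)^8)^{6\ell(p+q)}$. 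Because the vertical projection induces an isomorphism of the face structures of $P$ and the embedding of $G$, the graph of $G$ is the graph of $P$; hence $G$ is realized as the graph of a convex polyhedron lying in an integer grid of the stated size. (Concretely this is the convex polyhedron produced by Lemma~\ref{height} — in general unbounded, since for a grid triangulation $\partial\F(G)$ has $2(p+q)-4\ge 4$ vertices and so is never a triangle — which is exactly what the statement refers to.)

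I do not anticipate a genuine obstacle: all the difficulty is absorbed into Theorems~\ref{t:size} and~\ref{t:diam}. The only step that needs a sentence of justification is the implication ``every triangular face fits in an $\ell\times\ell$ subgrid'' $\Longrightarrow$ ``every edge has both endpoints in an $\ell\times\ell$ subgrid,'' which is immediate because each edge lies on a triangular face; after that it is the routine substitution of $n=pq$ and $\tau(G)\le 6\ell(p+q)$ into the grid bound of Theorem~\ref{t:size}, together with the monotonicity of the height factor in the exponent.
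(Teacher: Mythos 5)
Your proposal is correct and matches the paper's own derivation, which obtains Theorem~\ref{t:grid} as an immediate consequence of Theorems~\ref{t:size} and~\ref{t:diam} with $n=pq$ and $\tau(G)\le 6\ell(p+q)$. Your extra sentence checking that the triangle-in-subgrid hypothesis implies the edge-in-subgrid hypothesis of Theorem~\ref{t:diam} is the only bridging step, and you handle it exactly as intended.
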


Corollary~\ref{c:grid} now follows by setting $p=q=k$.

\bigskip

\section{Final remarks and open problems}\label{s:fin}

\subsection{}
The study of the Quantitative Steinitz Problem was initiated by Onn and
Sturmfels in~\cite{OS}, who gave the first nontrivial upper bound on the
grid size.  For plane triangulations, a different approach was given
in~\cite{DG}.   Since then, there have been a series of
improvements (see~\cite{BS,R,Ro}), leading to the currently best $\exp O(n)$ bound
in~\cite{RRS}.  The only other class of graphs for which there is a
subexponential bound, is the class of triangulations corresponding to stacked
polytopes~\cite{DS}, which can be embedded into a polynomial size grid.

In the opposite direction, there are no non-trivial lower bounds on the
size of the grid.  If anything, all the evidence suggests that the answer
may be either polynomial or near-polynomial.  Note, for example,
that while the number of plane triangulations on~$n$ vertices is $\exp O(n)$
(see e.g.~\cite{DRS}), the number of grid polytopes in a polynomial size cube
$O(n^d)\times O(n^d) \times O(n^d)$, is superexponential, see~\cite{BV}.
Of course, many of these have isomorphic graphs.  In any event, we conjecture that
for triangulations a polynomial size grid is sufficient indeed.

\subsection{}
Our Theorem~\ref{t:grid} is a variation on results in~\cite{BR,FPP} and
can be viewed as a stand alone result in \emph{Graph Drawing}.
It is likely that the polynomial bounds in the theorem can be substantially
improved.  We refer to~\cite{TDET} for  general background in the field.

\subsection{}
Let us mention that not every grid triangulation is \emph{regular} (see~\cite{DRS}
for definitions and further references).  An example found by Santos
(quoted in~\cite{KZ}), is shown in Figure~\ref{f:santos} in the introduction.
This means that one cannot embed this triangulation by a direct lifting;
another plane embedding of the triangulation is necessary for that.

\subsection{}\label{ss:fin-diam}
The shedding diameter of a plane triangulation~$G$ is closely related and
bounded from above (up to an additive constant), by the \emph{optimal height}
of the \emph{visibility representation} of~$G$.  This is a parameter of
general graphs, defined independently in~\cite{RT,TT}, and explored
extensively in a series of recent papers by He, Zhang and others
(see e.g.~\cite{HZ,HWZ,ZH1,ZH2}).  Motivated by VLSI applications,
the results in these papers give linear upper bounds on the optimal
height of various classes, which are too weak for the desired
subexponential upper bounds in the Quantitative Steinitz's Problem.
In fact, one can view our Theorem~\ref{t:grid} as a rare sublinear
bound on the height representation of a class of graphs.

\subsection{}\label{ss:fin-rand}
While the shedding diameter is linear in the worst case, it is sublinear
in a number of special cases.  For example, for \emph{random} stacked
triangulations the shedding diameter becomes the height of a random
ternary tree, or $\theta(\sqrt{n})$, see e.g.~\cite{FS}.  For the
(nearly-) balanced stacked triangulations~$G$ we have $\tau(G)=O(\log n)$,
giving a nearly polynomial upper bound in the Quantitative Steinitz's Problem.
While these cases are covered by a
polynomial bound in~\cite{DS}, notice that our proof is robust enough
to generalize to other related iterative families. In fact,
we conjecture that $\tau(G) = O(\sqrt{n})$ w.h.p.,
for random triangulations with $n$ vertices (cf.~\cite{CFGN}).

\bigskip

\noindent
\textbf{Acknowledgements}  \, The authors are grateful to
Jes\'{u}s De Loera,  Stefan Felsner,  Alexander Gaifullin,
J\'{a}nos Pach, Rom Pinchasi, Carsten Thomassen, Jed Yang, and
G\"{u}nter Ziegler for helpful comments and interesting conversations.
We are especially thankful to G\"{u}nter Rote for the careful reading
of the previous draft of the manuscript, a number of useful remarks
and help with the references.  A preliminary version of this work
has appeared in the second author's Ph.D.~thesis~\cite{Wi}.
The first author was partially
supported by the BSF and NSF.

\vskip1.2cm

 \newpage

{\footnotesize

}

\end{document}